\def\ga{\alpha}
\def\gl{\lambda}
\def\bgl{\boldsymbol{\lambda}}
\def\sh{\mathscr{H}}
\def\mpn{\mathscr{P}(m, n)}
\def\rr{\rightarrow}
\def\la{\langle}
\def\ra{\rangle}
\newenvironment{point}[2]%
  {\ifx*#2\let\pointlabel\relax\else\def\pointlabel{#2}\fi
   \refstepcounter{equation}\trivlist
   \item[\hskip\labelsep\theequation.
         \ifx\pointlabel\relax\else\space\pointlabel\space\fi]
   \ignorespaces #1
  }{\relax}
\numberwithin{equation}{section}
\newtheorem{theorem}[equation]{Theorem}
\newtheorem{lemma}[equation]{Lemma}
\newtheorem{corollary}[equation]{Corollary}
\theoremstyle{definition}
\newtheorem{definition}[equation]{Definition}
\theoremstyle{remark}
\newtheorem{remark}[equation]{Remark}
\begin{document}
\begin{CJK*}{GBK}{song}
\setlength{\itemsep}{-0.2cm}
\fontsize{11}{\baselineskip}\selectfont
\setlength{\parskip}{0.35\baselineskip}
\vspace*{-5mm}
\title[Symbolical and cancellation-free formulae]{\fontsize{10}{\baselineskip}\selectfont The Symbolical and cancellation-free formulae for Schur elements }
\author{Deke Zhao}
\address{\begin{tabular}{l}
          \textsl{School of Applied Mathematics, Beijing Normal University at Zhuhai, Zhuhai, 519085, China}\\
           \textsl{E-mail address: deke@amss.ac.cn}
           \end{tabular}}
\thanks{This research was supported by NSFC grant no.~11101037.}
\subjclass[2010]{Primary 16G99; Secondary 20C20, 20G05.}
\keywords{(Degenerate) cyclotomic Hecke algebras; Complex reflection groups; Schur elements; $L$-symbols}
\begin{abstract}In this paper we give the symbolical formula and cancellation-free formula for the Schur elements
 associated to the simple modules of the degenerate cyclotomic Hecke algebras. As some applications, we show
  that the Schur elements are symmetric polynomials with rational integer coefficients and give a different proof of
  Ariki-Mathas-Rui's criterion on the semisimplicity of the degenerate cyclotomic Hecke algebras.
\end{abstract}
\maketitle
\vspace*{-10mm}
\section{Introduction}
 Schur elements play a powerful role in the representation theory of symmetric algebras (see e.g.
\cite[Chap.\,9]{Curtis-Reiner} and \cite[Chap.\,7]{Geck}). In the case of the degenerate
 cyclotomic Hecke algebra (dCHA), Brundan and Kleshchev \cite[Theorem A2]{BK-Schur-Weyl} showed
 that it is a symmetric algebra for all parameters, which enables us to use the Schur
 elements to determine when Specht modules of the dCHA are projective irreducible
 and whether the algebra is semisimple.

  Very recently, an explicit formula for
 the Schur elements associated to the simple modules of dCHA was given by
  the author in \cite{zhao} following Mathas' work \cite{Mathas-J-Algebra}. This paper continues our study on the Schur elements, which is inspired by Geck, Iancu and Malle's work \cite{GIM}, Mathas' work \cite{Mathas-J-Algebra} and Chlouveraki and Jacon's work
  \cite{Ch-Jacon}. The aim of this paper is to give an $L$-symbolical formula for the Schur elements (Theorem~\ref{Them:symmetric}) and a cancellation-free formula for the Schur elements (Theorem~\ref{Thm:cancellation}).
   As some direct applications, we show that the Schur elements are symmetric polynomials with rational
    integer coefficients  and provide a different proof of  Ariki-Mathas-Rui's criterion on the semisimplicity
    of the dCHA.

 The lay-out of this paper as follows. In section~\ref{Sec:Preliminaries} we introduce the necessary definitions and fix the notation.
  The $L$-symbolical formula for the Schur elements is given in
    Section~\ref{Sec:symmetric} and then the cancellation-free formula for the Schur elements is determined in Section~\ref{Sec cancelation}. Finally, some direct applications
      of the formulae and some remarks are given in Section~\ref{Sec: Applications}.
\subsection*{Acknowledgments}This work was partially carried out while the author was visiting the Academy of
Mathematics and Systems Science, CAS in Beijing. We are most deeply indebted to
Nanhua Xi and Yang Han for their invaluable help. We are grateful to Ming Fang for useful conversations.

\section{Preliminaries}\label{Sec:Preliminaries}
In this section, we introduce the necessary definitions and notation.

\begin{point}{}*A {\it partition} $\lambda=(\lambda_1\geq\lambda_2\geq\cdots)$ is a decreasing sequence of non-negative integers containing only finitely many non-zero terms.
We define the {\it length} of $\lambda$ to be the smallest integer $\ell(\lambda)$
such that $\lambda_i=0$ for all $i>\ell(\lambda)$ and set $|\lambda|:=\sum_{i \geq 1}\lambda_i$.
If $|\lambda|=n$ we say that $\lambda$ is a \textit{partition} of $n$.

The \textit{diagram} of a partition $\lambda$ may be formally defined as the set
$[\lambda]:=\{(i,j)\,\mid\, i\geq 1, 1 \leq j \leq \lambda_i\}$.
The elements of $[\gl]$ are the \textit{nodes} of $\gl$ and we say that a node $(i,j)$ of $\gl$ is
{\it removable} if $[\lambda]\backslash\{(i,j)\}$ is still the diagram of a partition of $|\gl|-1$.

 The \textit{conjugate} of  a partition $\gl$ is the partition $\hat{\gl}=(\hat{\gl}_1\geq\hat{\gl}_2\geq\dots)$
 whose  diagram is the transpose of the diagram of $\gl$, i.e. $\hat{\gl}_i$ is the number of nodes in the $i$th column of the diagram of $\gl$. Hence $\hat{\gl}_1=\ell(\gl)$ and a node
$(i,j)$ of $\gl$ is removable if and only if $j=\gl_i$ and $i=\hat{\gl}_{j}$.

Recall that the $(i,j)$-th {\it hook} in the diagram $[\lambda]$ is the
collection of nodes to the right of and below the node $(i,j)$,
including the node $(i,j)$ itself, and that the $(i,j)$-th {\it hook length}
$h^{\lambda}_{i,j}=\lambda_i-i+\hat{\lambda}_j-j+1$ is the
number of nodes in the $(i,j)$-th hook.
 \end{point}

We will need the following lemma, whose proof is an easy combinatorial exercise.

\begin{lemma}\label{Lemm:indermine-identity}Let $\mu=(\mu_1\geq\mu_2\geq\dots \geq \mu_k>\mu_{k+1}=0)$ be a partition  and let $y$ be an indeterminate. Then for any integer $\ell$ such that $1\le \ell\le \mu_1$, we have
$$ \frac{1}{\mu_1+y}\prod_{1\le i\le\hat{\mu}_{\ell}}\frac{\mu_i-i+1+y}{\mu_i-i+y}=\frac{1}{\ell-\hat{\mu}_{\ell}-1+y}
\prod_{\ell\le j\le \mu_1}\frac{j-\hat{\mu}_j-1+y}{j-\hat{\mu}_j+y}.$$
\end{lemma}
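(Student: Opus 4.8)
The plan is to prove the identity by induction on $\ell$, reducing the general case to $\ell=\mu_1$ (equivalently $\ell=1$), and then verifying the base case by a direct telescoping argument. First I would observe that the stated identity is equivalent, after clearing denominators, to the claim that the ``correction factor''
\[
F(\ell):=\bigl(\ell-\bar{\mu}_{\ell}-1+y\bigr)\prod_{1\le i\le\bar{\mu}_{\ell}}\frac{\mu_i-i+1+y}{\mu_i-i+y}
\]
equals $(\mu_1+y)\prod_{\ell\le j\le\mu_1}\dfrac{j-\bar{\mu}_j-1+y}{j-\bar{\mu}_j+y}$, and that this second expression, call it $G(\ell)$, is what one should track. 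So I would show $F(\ell)=G(\ell)$ for all $1\le\ell\le\mu_1$ by downward induction on $\ell$ (from $\ell=\mu_1$ down to $\ell=1$), checking that $F(\ell)/F(\ell+1)=G(\ell)/G(\ell+1)$ for each step; the ratio on the $G$-side is visibly $\dfrac{\ell-\bar{\mu}_{\ell}-1+y}{\ell-\bar{\mu}_{\ell+1}+y}\cdot\dfrac{\ell+1-\bar{\mu}_{\ell+1}+y}{\ell+1-\bar{\mu}_{\ell}-1+y}$... actually it is simplest to compare $\log$-differences node by node.

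The cleaner route, which I would actually carry out, is to evaluate both sides directly using the combinatorial identities in Remark~\ref{Rem:imath-jmath}. Fix $\ell$ and set $r=\bar{\mu}_{\ell}$, the number of rows of length $\ge\ell$. On the left side, the product $\prod_{1\le i\le r}\frac{\mu_i-i+1+y}{\mu_i-i+y}$ does not telescope as it stands, but writing $a_i:=\mu_i-i+y$ we can reorganize the numerators and denominators across consecutive rows: the numerator for row $i$ is $\mu_i-i+1+y$, and one checks that the multiset $\{\mu_i-i+1+y : 1\le i\le r\}$ and $\{\mu_i-i+y : 1\le i\le r\}$ differ only in controlled ``boundary'' terms governed by where consecutive $\mu_i$ agree or drop. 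The right side telescopes in the column index $j$ in exactly the analogous way with $b_j:=j-\bar{\mu}_j+y$. The key bridge is the reciprocity $\mu_i\ge \ell \iff \bar{\mu}_\ell\ge i$, i.e. $r=\bar{\mu}_\ell$ and $\mu_r\ge\ell>\mu_{r+1}$, together with the dual statement $\bar{\mu}_j\ge 1 \iff j\le\mu_1$; these let me match the surviving boundary factor $\mu_1+y$ on the left (from $i=1$, since $\mu_1-1+1+y$) against the surviving factor on the right and identify the leftover denominator $\ell-\bar{\mu}_\ell-1+y = \ell-r-1+y$ with the telescoped column contribution at $j=\ell$.

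Concretely the steps are: (1) rewrite both products as ratios of two products over a common shifted index set, using $a_i=\mu_i-i+y$ for rows and $b_j=j-\bar\mu_j+y$ for columns; (2) prove the ``staircase'' bijection between the multiset of row-values $\{\mu_i-i+y\}_{1\le i\le r}$ enlarged by the numerator shifts and the multiset of column-values $\{j-\bar\mu_j+y\}_{\ell\le j\le\mu_1}$ enlarged similarly — this is where Remark~\ref{Rem:imath-jmath} and the conjugation identity $\sum$-free description of $[\mu]$ get used, essentially because a hook in $[\mu]$ crossing the $\ell$-th column corner is counted the same way from the row side and the column side; (3) read off that the unmatched factors are precisely $\mu_1+y$ on one side and $\ell-\bar\mu_\ell-1+y$ on the other, which yields the identity after cross-multiplying. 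The main obstacle is step~(2): making the boundary bookkeeping precise so that the telescoping on each side leaves exactly one factor, and checking that the two leftover factors pair up correctly across the conjugation — in particular handling the places where $\mu$ has repeated parts (so several rows share a value) or jumps (so several columns share $\bar\mu_j$), since there the naive telescoping stalls and one must argue that numerators and denominators from non-adjacent indices cancel. Once the bijection in (2) is pinned down, (1) and (3) are routine algebra.
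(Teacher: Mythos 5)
Your overall strategy --- telescope both sides to a common closed form by exploiting the piecewise constancy of $\bar{\mu}_j$ --- is the same one the paper uses, but the proposal stops short of a proof at exactly the point where the work lies: you yourself label step~(2), the ``staircase bijection'' with its boundary bookkeeping, as ``the main obstacle'' and leave it unresolved. That step \emph{is} the lemma, so as written this is a plan rather than a proof. The missing piece is short and you should carry it out. Fix $t$ with $\mu_{t+1}+1\le\ell\le\mu_t$, so that $\bar{\mu}_\ell=t$, and note more generally that $\bar{\mu}_j=i$ precisely for $\mu_{i+1}+1\le j\le\mu_i$. On the left, the $i$-th numerator is $\mu_i-(i-1)+y$, so after cancelling the prefactor $\frac{1}{\mu_1+y}$ against the $i=1$ numerator the whole side collapses to
\[
\frac{1}{\mu_t-t+y}\prod_{1\le i\le t-1}\frac{\mu_{i+1}-i+y}{\mu_i-i+y}.
\]
On the right, split $\prod_{\ell\le j\le\mu_1}$ into the blocks on which $\bar{\mu}_j$ is constant: on the top block $\ell\le j\le\mu_t$ the factors $\frac{j-t-1+y}{j-t+y}$ telescope to $\frac{\ell-t-1+y}{\mu_t-t+y}$, which cancels the prefactor $\frac{1}{\ell-t-1+y}$ and leaves $\frac{1}{\mu_t-t+y}$; on the block $\mu_{i+1}+1\le j\le\mu_i$ (for $i<t$) the factors telescope to $\frac{\mu_{i+1}-i+y}{\mu_i-i+y}$. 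Both sides therefore equal the displayed expression, and no multiset bijection or separate case analysis for repeated parts is needed --- the repeated parts are exactly what the blocks of constant $\bar{\mu}_j$ encode.

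Incidentally, the downward induction on $\ell$ that you sketch first and then abandon does go through and is arguably the cleaner of your two routes: the only nontrivial step is when $\bar{\mu}_{\ell+1}<\bar{\mu}_\ell$, where the newly acquired left-hand factors are indexed by rows with $\mu_i=\ell$ and telescope to $\frac{\ell-\bar{\mu}_{\ell+1}+y}{\ell-\bar{\mu}_\ell+y}$, after which both ratios $F(\ell)/F(\ell+1)$ and $G(\ell)/G(\ell+1)$ equal $\frac{\ell-\bar{\mu}_\ell-1+y}{\ell-\bar{\mu}_\ell+y}$, and the base case $\ell=\mu_1$ is a one-line telescoping. Either completion is fine; what you submitted is not yet either of them.
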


\begin{point}{}* Let $m, n$ be positive integers. Recall from \cite{shephard-toda} or \cite{cohen} that the complex
  reflection group $W_{m,n}$ of type $G(m, 1, n)$ is the finite group generated by elements $s_0, s_1, \dots, s_{n-1}$ subject to the relations
 $$\begin{aligned}&s_0^m=1, \quad
 s_0s_1s_0s_1=s_1s_0s_1s_0\\
 &s_i^2=1, \quad s_is_{i+1}s_i=s_{i+1}s_{i}s_{i+1}, \quad i\geq1\\
 & s_is_j=s_js_i, \quad |i-j|>1.\end{aligned}$$
 In particular, the subgroup $\la s_1, \dots, s_{n-1}\ra$ of $W_{m,n}$ is isomorphic to the
 symmetric group $S_n$ of degree
 $n$ with simple transpositions $\sigma_i=(i,i+1)$ for $i=1, \dots, n-1$.
It is well-known that $W_{m,n}\cong(\mathbb{Z}/m\mathbb{Z})^n\rtimes S_n$. Clearly, $W_{1,n}$ is
the Weyl group of type $A_n$ and
 $W_{2,n}$ is the Weyl group of type $B_n$.
 \end{point}

\begin{definition}\label{Def DCHA}Let $R$ be a field and $Q=(q_1, \dots, q_m)\in R^m$.
The {\it degenerate cyclotomic Hecke algebra} (dCHA) is the unital
associative $R$-algebra  $\sh(Q)\!:=\sh_{m,n}(Q)$ with generators
$s_0,s_1,\dots,s_{n-1}$ and relations

\begin{enumerate}\item
$(s_0-q_1)\dots(s_0-q_m)=0$,
\item $s_0(s_1s_0s_1+s_1)=(s_1s_0s_1+s_1)s_0$,
\item $s_i^2=1$,                \quad $1\le i<n$,
\item $s_is_{i+1}s_i=s_{i+1}s_is_{i+1}$, \quad  $1\le i<n-1$,
\item $s_is_j=s_js_i$,                   \quad$|i-j|>1$.
\end{enumerate}
\end{definition}

\begin{remark}\begin{enumerate}\item Definition~\ref{Def DCHA}
coincides with that given in \cite[\S 7]{Kbook}) and that given in \cite[P$_{61}$]{AMR}.
 \item $\sh_{1,n}(Q)$ is exactly the group algebra $RS_n$ and $\sh(Q)$ is a (degenerate cyclotomic) deformation
of the $R$-group algebra of the complex reflection group $W_{m,n}$.\end{enumerate} \end{remark}

\begin{point}{}*
The elements $x_1\!:=\!s_0$
and $x_{i+1}\!:=\!s_ix_is_i\!+\!s_i$, $1\!\le\!i\!\le\!n\!-\!1$, are called
the \textit{Jucys-Murphy elements} of $\sh(Q)$. In \cite[Theorem 7.5.6]{Kbook},
Kleshchev proved that $\sh$ is a
free $R$-module with basis \begin{align*}&\{x_1^{i_1}x_2^{i_2}\cdots x_n^{i_n}w\mid0\le i_1, \dots, i_n<m, w\in S_n\}.\end{align*}
Let $\tau:\sh(Q)\rr R$ be the $R$-linear map determined by
$$\tau(x_1^{i_1}\cdots x_n^{i_n} w)
        :=\left\{\begin{array}{ll} 1,&\text{ if }i_1=\cdots=i_n=m-1 \text{ and } w=1,\\
                0,&\text{ otherwise}.
                \end{array}\right.$$
Then $\tau$ is a non-degenerate trace form on $\sh(Q)$ for all $Q\in R^m$, i.e. $\sh(Q)$  is a
symmetric algebra for all $Q\in R^m$, see \cite[Theorem A2]{BK-Schur-Weyl}.\end{point}

Recall that an {\it $m$-multipartition} of $n$ is a ordered $m$-tuple
$\bgl=(\gl^1; \cdots; \gl^m)$ of partitions $\lambda^{i}$ such that
$n=\sum_{i=1}^m|\lambda^{i}|$. We define the {\it length} of $\bgl$ to be
$\ell(\bgl)=\max\{\ell(\lambda^{s})|1\le s\le m\}$ and denote by $\mpn$ the set of all $m$-multipartitions of $n$.
The {\it diagram} of an $m$-multipartition $\boldsymbol{\lambda}$ is the set
\begin{align*}[\bgl]:=\{(i,j,c)\in\mathbb{Z}_{>0}\times\mathbb{Z}_{>0}\times \mathbf{m}|1\le j\le\lambda^c_i\} \quad\text{ where }\mathbf{m}=\{1, \dots, m\}.\end{align*}
As  in $[\bgl]$ the element is called the \textit{node} of $\bgl$; more generally, a node is any element of
$\mathbb{Z}_{>0}\times \mathbb{Z}_{>0}\times \mathbf{m}$. We  may and will identify $[\bgl]$ with
 the $m$-tuple of diagrams of the partitions $\lambda^{c}$, for $1\le c\le m$.

\begin{point}{}*\label{separated} Let $R$ be a field and $Q=(q_1, \dots, q_m)\in R^m$. We set
 \begin{align*}P_\sh(Q):=n!
  \prod_{1\le i<j\le m}\prod_{|d|<n}(d+q_i-q_j).\end{align*}
  Ariki, Mathas and Rui~\cite[Theorem 6.11]{AMR} have shown
that $\sh(Q)$ is semisimple if and only if $P_\sh(Q)\!\neq\!0$. This criterion will be recovered form our results
later (see Theorem~\ref{ssimple}).

From now on we assume that $\sh(Q)$ is semisimple.
Then $\{S^{\bgl}\mid\bgl \in \mpn\}$ is a complete set of pairwise
non-isomorphic irreducible $\sh(Q)$-modules and we let $\chi^{\bgl}$ be the
character of $S^{\bgl}$. Following Geck's results on symmetrizing forms (see \cite[Theorem 7.2.6]{Geck}),
we obtain the following definition for the Schur elements of $\sh(Q)$ associated to the irreducible
representations of $\sh(Q)$. \end{point}

\begin{definition}
Assume that $\sh(Q)$ is semisimple. The {\it Schur
elements} of $\sh(Q)$ are the elements $s_{\bgl}(Q)\in R$ such that
$$\tau=\sum_{\bgl\in\mpn}\frac{\chi^{\bgl}}{s_{\bgl}(Q)}.$$
\end{definition}

An explicit formula for the Schur elements for $\sh(Q)$ is given by the following theorem.
\begin{theorem}[\cite{zhao}, Theorem~7.9]Let $\bgl=(\gl^1;\dots;\gl^m)$ be an $m$-multipartition of $n$. Then
\begin{align*}s_{\bgl}(Q)=\prod_{(i,j,s)\in[\bgl]}\!\!%
    h^{\lambda^{s}}_{ij}\prod_{1\le s<t\le m}X^{\bgl}_{st},\end{align*}
where, for $1\le s<t\le m$,
\begin{align*}X^{\bgl}_{st}=\prod_{(i,j)\in[\gl^t]}(j-i+q_t-q_s)\prod_{(i,j)\in [\gl^s]}\biggl((j-i-\gl_1^t+q_s-q_t)
\prod_{1\le k\le\gl_1^t}\frac{j-i+\hat{\gl}_k^t-k+1+q_s-q_t}{j-i+\hat{\gl}_k^t-k+q_s-q_t}\biggr).\end{align*}
\label{Them:Schur-elements}
 \vspace{-5mm}
\end{theorem}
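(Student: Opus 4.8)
The plan is to combine the general theory of symmetric algebras with an explicit computation of $\tau$ on a seminormal primitive idempotent. By Assumption~\ref{separated} the algebra $\sh(Q)$ is (split) semisimple, so it carries Young's seminormal form: for each $\bgl\in\mpn$ and each standard $\bgl$-tableau $\mathfrak{t}$ there is a primitive idempotent $F_{\mathfrak{t}}\in\sh(Q)$ with $\sh(Q)F_{\mathfrak{t}}\cong S^{\bgl}$, and $F_{\mathfrak{t}}$ lies in the commutative subalgebra generated by the Jucys--Murphy elements $x_{1},\dots,x_{n}$, being the unique nonzero idempotent there with $x_{k}F_{\mathfrak{t}}=c_{\mathfrak{t}}(k)F_{\mathfrak{t}}$ for all $k$, where $c_{\mathfrak{t}}(k)=q_{s}+j-i$ is the content of the node $(i,j,s)$ of $\mathfrak{t}$ carrying the entry $k$; explicitly $F_{\mathfrak{t}}$ is a product of linear factors $\frac{x_{k}-c}{c_{\mathfrak{t}}(k)-c}$ in the $x_{k}$'s. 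The first step is then standard (as in the proof of \cite[Theorem 7.2.6]{Geck}): since $F_{\mathfrak{t}}$ annihilates $S^{\boldsymbol{\mu}}$ for $\boldsymbol{\mu}\neq\bgl$ and acts on $S^{\bgl}$ as a rank-one idempotent, applying $\tau=\sum_{\boldsymbol{\mu}}\chi^{\boldsymbol{\mu}}/s_{\boldsymbol{\mu}}(Q)$ to $F_{\mathfrak{t}}$ yields $s_{\bgl}(Q)=1/\tau(F_{\mathfrak{t}})$, a value independent of the choice of $\mathfrak{t}$; I would take $\mathfrak{t}=\mathfrak{t}^{\bgl}$, the tableau filling $[\bgl]$ one component after another, each component row by row from left to right.

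The heart of the proof is the evaluation of $\tau(F_{\mathfrak{t}^{\bgl}})$. As $F_{\mathfrak{t}^{\bgl}}$ is a polynomial in the commuting elements $x_{1},\dots,x_{n}$, rewriting it in Kleshchev's basis $\{x_{1}^{i_{1}}\cdots x_{n}^{i_{n}}w\}$ of \cite[Theorem 7.5.6]{Kbook} and invoking Theorem~\ref{Thm-def tau} reduces $\tau(F_{\mathfrak{t}^{\bgl}})$ to the coefficient of $x_{1}^{m-1}\cdots x_{n}^{m-1}$ on the $w=1$ part of that expansion. I would compute this coefficient by induction on $n$, using the product form of $F_{\mathfrak{t}^{\bgl}}$ and the description of how $x_{n}$ and the added node interact in the seminormal form: the contributions of the entries lying in a single component $\gl^{s}$, which involve only the axial distances within $\gl^{s}$, give by the hook-length formula the ``diagonal'' factor $\prod_{(i,j)\in[\gl^{s}]}h^{\gl^{s}}_{ij}$, while each pair of distinct components $s<t$, whose contents differ by $q_{s}-q_{t}$, contributes a factor built from $\prod_{(i,j)\in[\gl^{t}]}(j-i+q_{t}-q_{s})$ together with, for every node of $\gl^{s}$, a telescoping product running over the columns of $\gl^{t}$. (One could instead try to degenerate the known Schur-element formula for Ariki--Koike algebras, but controlling the relevant $q\to 1$ limit is delicate, so a direct computation seems cleaner.)

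It remains to put those telescoping products into closed form. Each one has exactly the shape of a side of Lemma~\ref{Lemm:indermine-identity}, taken with $\mu=\gl^{t}$ and $y$ a suitable shift of $j-i+q_{s}-q_{t}$; applying the lemma, together with the elementary identities of Remark~\ref{Rem:imath-jmath} (notably $\ell(\gl^{t})=\bar{\gl}_{1}^{t}$), turns the contribution attached to a node $(i,j)\in[\gl^{s}]$ into $(j-i-\gl_{1}^{t}+q_{s}-q_{t})\prod_{1\le k\le\gl_{1}^{t}}\frac{j-i-k+\bar{\gl}_{k}^{t}+1+q_{s}-q_{t}}{j-i-k+\bar{\gl}_{k}^{t}+q_{s}-q_{t}}$, which is precisely the per-node factor of $X^{\bgl}_{st}$. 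Assembling the diagonal and off-diagonal contributions then gives $1/\tau(F_{\mathfrak{t}^{\bgl}})=\prod_{(i,j,s)\in[\bgl]}h^{\gl^{s}}_{ij}\prod_{1\le s<t\le m}X^{\bgl}_{st}$, as required.

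I expect the main obstacle to be the middle step: one must carry out the reduction of the high-degree polynomial $F_{\mathfrak{t}^{\bgl}}$ in the Jucys--Murphy elements to Kleshchev's basis carefully enough to pin down the coefficient of $x_{1}^{m-1}\cdots x_{n}^{m-1}$ on the $w=1$ part, and one must arrange the bookkeeping so that the intra-component data collapses exactly to hook lengths while the inter-component data collapses exactly to the factors $X^{\bgl}_{st}$. By comparison, the concluding combinatorial manipulations are formal once Lemma~\ref{Lemm:indermine-identity} and the hook-length formula are in hand.
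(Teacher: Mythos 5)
This theorem is not proved in the present paper at all: it is quoted from the author's earlier work \cite{zhao}, whose method the introduction summarizes as ``computing the trace form on some nice idempotents.'' Your overall strategy therefore coincides with that of the cited source: under the separation assumption $\sh(Q)$ is split semisimple, the seminormal idempotents $F_{\mathfrak{t}}$ built from the Jucys--Murphy elements satisfy $\chi^{\boldsymbol{\mu}}(F_{\mathfrak{t}})=\delta_{\boldsymbol{\mu}\bgl}$, and hence $s_{\bgl}(Q)=1/\tau(F_{\mathfrak{t}})$. That reduction is sound and standard.

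The problem is that everything after this reduction --- which is the entire substance of the theorem --- is asserted rather than proved. You say you ``would compute'' the coefficient of $x_1^{m-1}\cdots x_n^{m-1}$ in the expansion of $F_{\mathfrak{t}^{\bgl}}$ by induction on $n$, and then simply declare that the intra-component data collapses to $\prod h^{\gl^s}_{ij}$ and the inter-component data to the factors $X^{\bgl}_{st}$; but no mechanism is offered for either claim. This step is genuinely hard: $F_{\mathfrak{t}^{\bgl}}$ has degree in each $x_k$ equal to one less than the number of admissible contents at step $k$, which far exceeds $m-1$, so rewriting it in Kleshchev's basis forces repeated use of the cyclotomic relation and of the commutation relations $x_{i+1}=s_ix_is_i+s_i$, and the $w\neq 1$ terms generated along the way feed back into the $w=1$ coefficient. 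An induction on $n$ would need a precise inductive statement --- e.g.\ a formula for the relative trace (conditional expectation) of $F_{\mathfrak{t}^{\bgl}}$ from $\sh_{m,n}(Q)$ to $\sh_{m,n-1}(Q)$ in terms of the idempotent for the tableau with the last node removed --- and you state none. Until that computation is actually carried out, the proposal is a plan that correctly locates the difficulty (as you yourself acknowledge in your closing paragraph) but does not resolve it; the concluding appeal to Lemma~\ref{Lemm:indermine-identity} only reshapes an answer you have not yet derived.
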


\begin{remark}The formula for the Schur elements for the dCHA can not be obtained from
the ones for non-degenerate cyclotomic Hecke algebra by specializing $q$ to $1$.\end{remark}
\section{The $L$-symbolical formula for Schur elements}\label{Sec:symmetric}
In this section we give the $L$-symbolical formula for the Schur elements.
Before doing this, we need the notation of the $L$-symbol of a multipartition introduced by Malle \cite{Malle95}.

\begin{definition}Let $\gl=(\gl_1, \cdots, \gl_t)$ be a partition and fix an integer $L$ such that $L\ge\ell(\lambda)$. The
$L$-{\it beta numbers} for $\lambda$ are the integers
$\beta^{\gl}_i=\lambda_i+L-i$ for $i=1,\dots,L$. For an $m$-multipartition $\bgl=(\gl^1;\dots;\gl^m)$,
 the $m\times L$ matrix $B^{\bgl}_L=(\beta^{s}_i)_{s,i}$, where $\beta^{s}_i=\gl_i^s+L-i$, is called the $L$-{\it symbol} of $\bgl$.
\end{definition}

From now on, we denote by $B^{\gl}_L$ the set of the $L$-beta numbers for a partition $\lambda$. Observe that
 \begin{align*}B^{\gl}_L= \big\{\beta_i^\gl=\gl_i+L-i\mid 1\le i\le\ell(\gl)\big\}\cup \big\{L-i\mid \ell(\gl)+1\leq i
 \leq L\big\}\end{align*} and that if we change $L$ to $L+1$ then $B_L^\gl$ is {\it shifted} to
$B_{L+1}^\gl=\big\{k+1\mid k\in B^\gl_{L}\big\}\cup \big\{0\big\}$. We say that a function
of beta numbers is {\it invariant under beta shifts} if it is unchanged by
such transformations; equivalently, the function is independent of
$L$ provided that $L$ is large enough. For example, the formula for
$s_\lambda(Q)$ is invariant under beta shifts since
$s_\lambda(Q)$ does not depend on $L$.

The following lemma is the key to the $L$-symbolical formula for the Schur elements.
\begin{lemma}Let $\gl$, $\mu$ be partitions and let $x$ be an
indeterminate. Define
\begin{align*}
X_{\gl\mu}(x)&:=\prod_{(i,j)\in[\mu]}(j-i-x)\prod_{(i,j)\in [\gl]}\biggl((j-i-\mu_1+x)
\prod_{1\le k\le\mu_1}\frac{j-i+\hat{\mu}_k-k+1+x}{j-i+\hat{\mu}_k-k+x}\biggr)\quad\text{ and }\\ \vspace{1\jot}
Y^{L}_{\gl\mu}(x)&:=(-1)^{\binom L2}x^L\,\frac{\displaystyle\prod_{a\in B_L^\gl}\prod_{1\leq i\leq a}(i+x)
\prod_{b\in B_L^\mu}\prod_{1\le j\le b}(j-x)}{\displaystyle\prod_{(a,b)\in B_L^\gl\!\times\!B_L^\mu}(a-b+x)}.
\end{align*}
Then  $X_{\gl\mu}(x)=Y^L_{\gl\mu}(x)$  for any integer $L\geq \max\big\{\ell(\gl), \ell(\mu)\big\}$.
\label{Lemm:X=Y}
\end{lemma}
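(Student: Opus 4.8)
The plan is to prove $X_{\gl\mu}(x)=Y^L_{\gl\mu}(x)$ by induction on $L$, showing first that $Y^L_{\gl\mu}(x)$ is invariant under beta shifts (so that the right-hand side genuinely depends only on $\gl$ and $\mu$, not on $L$), and then matching it against $X_{\gl\mu}(x)$ for one convenient value of $L$, or alternatively by a direct term-by-term reorganization of the product. Let me sketch the direct route. Writing $L=\ell:=\max\{\ell(\gl),\ell(\mu)\}$ to start, one expands $B_L^\gl=\{\gl_i+L-i\}$ and $B_L^\mu=\{\mu_i+L-i\}$ and tries to identify the three groups of factors of $Y^L_{\gl\mu}(x)$ — the $x^L$ prefactor, the two ``falling-factorial'' numerator products $\prod_{a\in B_L^\gl}\prod_{i\le a}(i+x)$ and $\prod_{b\in B_L^\mu}\prod_{j\le b}(j-x)$, and the denominator double product $\prod_{(a,b)}(a-b+x)$ — with the factors of $X_{\gl\mu}(x)$.

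The key combinatorial input is Lemma~\ref{Lemm:indermine-identity}: the identity there is exactly the mechanism converting a product indexed by the rows of a partition (the left side, a product over $1\le i\le\bar\mu_\ell$) into a product indexed by the columns (the right side, a product over $\ell\le j\le\mu_1$). In $X_{\gl\mu}(x)$ the inner product $\prod_{1\le k\le\mu_1}\frac{j-i+\bar\mu_k-k+1+x}{j-i+\bar\mu_k-k+x}$ is a column-indexed (conjugate) product, and after telescoping it collapses to a short ratio; applying Lemma~\ref{Lemm:indermine-identity} (with $y$ specialized to $j-i+x$ and $\ell$ chosen appropriately, or in the degenerate form $\ell=1$) rewrites this together with the leading factor $(j-i-\mu_1+x)$. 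The idea is that each node $(i,j)\in[\gl]$ contributes, via this telescoping/Lemma, a clean ratio of linear factors that reassembles across all nodes into $\prod_{a\in B_L^\gl}\prod_{i\le a}(i+x)$ divided by the relevant part of the denominator; symmetrically, the factor $\prod_{(i,j)\in[\mu]}(j-i-x)$ — which is essentially $(-1)^{|\mu|}$ times a product of hook-type linear terms over $[\mu]$ — is exactly what one gets from $\prod_{b\in B_L^\mu}\prod_{j\le b}(j-x)$ after cancelling the ``staircase'' contribution $\prod_{b}\prod_{j\le L-i,\,i>\ell(\mu)}(j-x)$, the leftover staircase factors being precisely absorbed into the $x^L$ prefactor and the sign $(-1)^{\binom L2}$. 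So the skeleton of the argument is: (1) show $Y^L_{\gl\mu}$ is shift-invariant, reducing to a single $L$; (2) for that $L$, split numerator and denominator of $Y^L_{\gl\mu}$ into ``partition part'' and ``staircase part''; (3) evaluate the staircase part explicitly as $(-1)^{\binom L2}x^L$-type garbage that cancels the prefactor; (4) match the partition part of the $\mu$-numerator with $\prod_{[\mu]}(j-i-x)$; (5) match the partition part of the $\gl$-numerator over the denominator with $\prod_{[\gl]}\bigl((j-i-\mu_1+x)\prod_k\frac{\cdots+1+x}{\cdots+x}\bigr)$ using Lemma~\ref{Lemm:indermine-identity} node by node.

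I expect step (5) — matching the $\gl$-indexed product — to be the main obstacle, because it is where the genuine telescoping over columns of $\mu$ interacts with the beta-set denominator $\prod_{(a,b)\in B_L^\gl\times B_L^\mu}(a-b+x)$, and one must track carefully which factors $(a-b+x)$ are cancelled by the numerator and which survive. A cleaner way to organize this, and probably the one I would actually write up, is to fix $\gl$ and induct on $\mu$ (adding one box at a time to $\mu$, say to row $i$ so that $\mu_i$ increases by $1$ and correspondingly one beta number $b\in B_L^\mu$ increases by $1$): one checks that both $X_{\gl\mu}(x)$ and $Y^L_{\gl\mu}(x)$ are multiplied by the same rational function of $x$ under such a move, which for $Y$ is a transparent ratio of a few linear factors, and for $X$ requires exactly the telescoping identity of Lemma~\ref{Lemm:indermine-identity}; the base case $\mu=\varnothing$ is then a short computation showing $X_{\gl\varnothing}(x)=\prod_{(i,j)\in[\gl]}(j-i+x)=Y^L_{\gl\varnothing}(x)$ (the latter because $B_L^\mu=\{L-1,\dots,1,0\}$ makes the denominator and $\mu$-numerator telescope against $x^L(-1)^{\binom L2}$). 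By symmetry of the statement in $(\gl,x)\leftrightarrow(\mu,-x)$ (up to the obvious sign bookkeeping) one could even bootstrap part of the $\gl$-induction from the $\mu$-induction. The routine but lengthy part is verifying the ``same ratio'' claim in the inductive step; I would present that verification compactly rather than expanding every product.
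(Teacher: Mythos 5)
Your ``cleaner way'' is essentially the paper's own proof: first establish that $Y^L_{\gl\mu}(x)$ is invariant under beta shifts, then induct on the number of nodes, comparing the ratio by which $X$ and $Y^L$ change when a box is removed and matching the two via Lemma~\ref{Lemm:indermine-identity}; the paper merely organizes this as a double induction (on $|\gl|$ and then on $|\mu|$, starting from $\gl=\mu=(0)$) instead of a single induction on $\mu$ with the base case $\mu=\varnothing$ computed directly for arbitrary $\gl$. The verifications you defer are exactly the computations the paper carries out (including the case split on where the removed box sits), so the plan is sound --- just avoid leaning on the symmetry $X_{\gl\mu}(x)=X_{\mu\gl}(-x)$ to shortcut anything, since for $X$ (unlike $Y^L$) that symmetry is a consequence of this lemma rather than an input to it.
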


\begin{proof}First, we show that $Y^{L}_{\gl\mu}(x)$ is invariant under beta shifts, i.e.
 $Y^{L}_{\gl\mu}(x)=Y^{L+1}_{\gl\mu}(x)$ for all integers
$L\geq \max\big\{\ell(\gl), \ell(\mu)\big\}$. Note that $B_{L+1}^\gl=\big\{i+1\mid i\in
B^\gl_L\big\}\cup\big\{0\big\}$ and $B^\mu_{L+1}=\big\{i+1\mid i\in B^\mu_L\big\}\cup\big\{0\big\}$.
 Therefore
\begin{align*}Y^{L+1}_{\gl\mu}(x)&=(\!-\!1\!)^{\binom {L+1}2}x^{L+1}\,\frac{\displaystyle\prod_{a\in B_{L+1}^\gl}
\displaystyle\prod_{1\leq i\leq a}(i+x)
\displaystyle\prod_{b\in B_{L+1}^\mu}\displaystyle\prod_{1\le j\le b}(j-x)}{\displaystyle\prod_{(a,b)\in B_{L+1}^\gl\!\times\!B_{L+1}^\mu}(a-b+x)}\\
      &=(\!-\!1\!)^{\binom {L+1}2}x^{L}\,
\frac{\displaystyle\prod_{a\in B_{L}^\gl}\displaystyle\prod_{1\leq i\leq a+1}(i+x)
\displaystyle\prod_{b\in B_{L}^\mu}\displaystyle\prod_{1\le j\le b+1}(j-x)}
{\displaystyle\prod_{(a,b)\in B_{L}^\gl\!\times\!B_{L}^\mu}(a-b+x)
\prod_{a\in B_{L}^\gl}(a+1+x)\prod_{b\in B_{L}^\mu}(x-b-1)}\\
      &=(\!-\!1\!)^{\binom {L+1}2+L}x^{L}\,\frac{\displaystyle\prod_{a\in B_{L}^\gl}\prod_{1\leq i\leq a}(i+x)
\prod_{b\in B_{L}^\mu}\prod_{1\le j\le b}(j-x)}{\displaystyle\prod_{(a,b)\in B_{L}^\gl\!\times\!B_{L}^\mu}(a-b+x)}\\
      &=Y^{L}_{\gl\mu}(x).\end{align*}

As a consequence, we may choose $L$ arbitrarily, provided that
$L\ge\max\big\{\ell(\lambda), \ell(\mu)\big\}$. Now we proceed to prove the lemma
 by induction on the numbers of nodes of the partitions $\gl$ and $\mu$. Our start step is
 that $X_{\gl\mu}(x)\!=\!Y^L_{\gl\mu}(x)=1$ when $\lambda\!=\!\mu\!=\!(0)$, which follows directly by taking $L\!=\!0$.

Next, assume by way of induction that we have
proved the lemma for all partitions $\nu$ and $\mu$ with $|\nu|\leq k-1\geq 0$. We consider
the partitions $\gl$ with $|\gl|=k\geq 1$ and $\mu$. Note that we
may choose $L$ large enough for partitions $\gl$, $\mu$ and $\nu$. Since $|\gl|\geq 1$, there exists $\imath$
 such that $(\imath, \jmath)$ is a removable node of $[\gl]$. Abusing notation,
  we denote by $\nu$
 the partition obtained from $\gl$ by removing the node $(\imath, \jmath)$, that is
 $[\nu]=[\gl]\backslash\big\{(\imath, \jmath)\big\}$.
 Then  \begin{align*}&B^\mu_{L}=
  \big\{\mu_i+L-i\mid 1\leq i\leq \ell(\mu)\big\}\cup \big\{L-i\mid \ell(\mu)+1\leq i\leq L \big\},\\
  &B^\gl_L=\big(\big\{\gl_i+L-i\mid 1\leq i\leq \ell(\nu)\big\}\cup \big\{L-i\mid \ell(\nu)+1\leq i\leq L \big\}
  \cup \big\{\jmath+L-\imath\big\}\big)\backslash\big\{\jmath-1+L-\imath\big\}.\end{align*}
  Thus, we obtain that
  \begin{align*}Y^{L}_{\gl\mu}(x)&=(-1)^{\binom L2}x^L\frac{\displaystyle\prod_{a\in B_L^\gl}\prod_{1\leq i\leq a}(i+x)
  \prod_{b\in B_L^\mu}\prod_{1\le j\le b}(j-x)}{\displaystyle\prod_{(a,b)\in B_L^\gl\!\times\!B_L^\mu}(a-b+x)}\\
      &=Y^{L}_{\nu\mu}(x)(\jmath+L-\imath+x)\prod_{b\in B^{\mu}_L}\frac{b+1+\imath-\jmath-L-x}{b+\imath-\jmath-L-x}\\
      &=Y^{L}_{\nu\mu}(x)(\jmath+L-\imath+x)\prod_{1\le i\le
       \ell(\mu)}\frac{\mu_i-i+1+\imath-\jmath-x}{\mu_i-i+\imath-\jmath-x}\prod_{\ell(\mu)+1\le i\le
        L}\frac{i-1+\jmath-\imath+x}{i+\jmath-\imath+x}\\
      &=Y^{L}_{\nu\mu}(x)(\jmath+\ell(\mu)-\imath+x)\prod_{1\le i\le
      \ell(\mu)}\frac{\mu_i-i+1+\imath-\jmath-x}{\mu_i-i+\imath-\jmath-x}.
   \end{align*}
  On the other hand, we have
\begin{align*}X_{\gl\mu}(x)&=X_{\nu\mu}(x)(\jmath-\imath-\mu_1+x)
\prod_{1\le k\le\mu_1}\frac{\hat{\mu}_k-k+1+\jmath-\imath+x}{\hat{\mu}_k-k+\jmath-\imath+x}\\
&=X_{\nu\mu}(x)(\jmath-\imath-\mu_1+x)
\prod_{1\le k\le\mu_1}\frac{k-\hat{\mu}_k-1+\imath-\jmath-x}{k-\hat{\mu}_k+\imath-\jmath-x}\\
&=X_{\nu\mu}(x)(\jmath+\ell(\mu)-\imath+x)\prod_{1\le i\le
 \hat{\mu}_1}\frac{\mu_i-i+1+\imath-\jmath-x}{\mu_i-i+\imath-\jmath-x}\\
&=X_{\nu\mu}(x)(\jmath+\ell(\mu)-\imath+x)\prod_{1\le i\le
\ell(\mu)}\frac{\mu_i-i+1+\imath-\jmath-x}{\mu_i-i+\imath-\jmath-x},\end{align*}
where the third equality follows by applying Lemma~\ref{Lemm:indermine-identity} for $\ell=1$,
$y=\imath-\jmath-x$ and the lats equality follows by noting that $\hat{\mu}_1=\ell(\mu)$.
This proves the lemma for the partitions $|\gl|$ with $|\gl|=k$ and $\mu$ by applying the induction argument.

Finally, assume by way of induction that we have
proved the lemma for all partitions $\gl$ and $\nu$ with $|\nu|=r-1\geq0$.
We consider the partitions $\gl$ and $\mu$ with $|\mu|=r$. Similarly we can choose $L$ large enough for $\gl$, $\mu$ and
$\nu$.  Since $|\mu|\geq 1$, there exists $\imath$
 such that $(\imath, \jmath)$ is a removable node of $[\mu]$. Abusing  notation,
  we denote by $\nu$ the partition obtained form $\mu$ by removing the node
  $(\imath, \jmath)$, that is $[\nu]=[\mu]\backslash \big\{(\imath, \jmath)\big\}$.
 Then \begin{align*}&B^\gl_{L}=\big\{\gl_i+L-i\mid 1\leq i\leq \ell(\gl)\big\}\cup \big\{L-i\mid
 \ell(\gl)+1\leq i\leq L \big\},\\
 &B^\mu_L=\big(\big\{\mu_i+L-i\mid 1\leq i\leq \ell(\nu)\big\}\cup \big\{L-i\mid \ell(\nu)+1\leq i\leq L \big\}
 \cup \big\{\jmath+L-\imath\big\}\big)\backslash\big\{\jmath-1+L-\imath\big\}.
 \end{align*}
  So, we obtain that
     \begin{align*}Y^{L}_{\gl\mu}(x)&=(-1)^{\binom L2}x^L\frac{\displaystyle\prod_{a\in B_L^\gl}
     \prod_{1\leq i\leq a}(i+x)
\prod_{b\in B_L^\mu}\prod_{1\le j\le b}(j-x)}{\displaystyle\prod_{(a,b)\in B_L^\gl\!\times\!B_L^\mu}(a-b+x)}\\
      &=Y^{L}_{\gl\nu}(x)(\jmath+L-\imath-x)\prod_{a\in B^{\gl}_L}\frac{a+1+\imath-\jmath-L+x}{a+\imath-\jmath-L+x}\\
      &=Y^{L}_{\nu\mu}(x)(\jmath+L-\imath-x)\prod_{1\le i\le \ell(\gl)}\frac{\gl_i-i+1+\imath-\jmath+x}{\gl_i-i+\imath-\jmath+x}\prod_{\ell(\gl)+1\le k\le L}\frac{k-1+\jmath-\imath-x}{k+\jmath-\imath-x}\\
      &=Y^{L}_{\nu\mu}(x)(\jmath+\ell(\gl)-\imath-x)\prod_{1\le i\le \ell(\gl)}\frac{\gl_i-i+1+\imath-\jmath+x}{\gl_i-i+\imath-\jmath+x}.
   \end{align*}
On the other hand, if $\imath=1$ then $\jmath=\mu_1$ and that we have
\begin{align*}X_{\gl\mu}(x)&=X_{\gl\nu}(x)(\jmath-\imath-x)\prod_{(i,j)\in [\gl]}
\frac{j-i-\mu_1+x}{j-i+1-\mu_1+x}\,\,\frac{j-i+\mu_1+2+x}{j-i+\mu_1+1+x}\\
&=X_{\gl\nu}(x)(\jmath-\imath-\mu_1+x)\prod_{1\le i\le \ell(\gl)}\prod_{1-i\le j\le \gl_i-i}
\frac{j-\mu_1+x}{j+1-\mu_1+x}\,\,\frac{j+2+\mu_1+x}{j+1+\mu_1+x}\\
&=X_{\gl\nu}(x)(\jmath-\imath-\mu_1+x)\prod_{1\le i\le \ell(\gl)}
\frac{i-1+\mu_1-x}{i-2+\mu_1-x}\prod_{1\le i\le \ell(\gl)}\frac{\gl_i-i+2-\mu_1+x}{\gl_i-i+1-\mu_1+x}\\
&=X_{\gl\nu}(x)(\jmath+\ell(\gl)-1-x)\prod_{1\le i\le \ell(\gl)}
\frac{\gl_i-i+2-\mu_1+x}{\gl_i-i+1-\mu_1+x},\end{align*}
where the last equality follows by using Lemma~\ref{Lemm:indermine-identity} for $\ell=1$ and $y=-\mu_1+1+x$.

Assume that $\imath\neq 1$. Then we get that
\begin{align*}X_{\gl\mu}(x)&=X_{\gl\nu}(\jmath-\imath-x)\prod_{(i,j)\in [\gl]}
\frac{j-i-1+\imath-\jmath+x}{j-i+\imath-\jmath+x}\,\,\frac{j-i+1+\imath-\jmath+x}{j-i+\imath-\jmath+x}\\
&=X_{\gl\nu}(x)(\jmath-\imath+x)\prod_{1\le i\le \ell(\gl)}\prod_{1-i\le j\le \gl_i-i}
\frac{j-1+\imath-\jmath+x}{j+\imath-\jmath+x}\,\,\frac{j+1+\imath-\jmath+x}{j+\imath-\jmath+x}\\
&=X_{\gl\nu}(x)(\jmath-\imath+x)\prod_{1\le i\le \ell(\gl)}
\frac{i+\jmath-\imath-x}{i-1+\jmath-\imath-x}\prod_{1\le i\le \ell(\gl)}\frac{\gl_i-i+1+\imath-\jmath+x}{\gl_i-i+\imath-\jmath+x}\\
&=X_{\gl\nu}(x)(\jmath-\imath+\ell(\gl)-x)\prod_{1\le i\le \ell(\gl)}
\frac{\gl_i-i+1+\imath-\jmath+x}{\gl_i-i+\imath-\jmath+x}.\end{align*}
Combining the above arguments, we prove the lemma for partitions  $\gl$ and $\mu$ with $|\mu|=r$. As a consequence, we prove the lemma for all partitions $\gl$ and $\mu$.
\end{proof}

The following property of $X_{\gl\mu}(x)$ will be used later, see Corollary~\ref{Cor:Schur-sym}.

\begin{corollary}\label{Cor:symmetric}Keep notation as in Lemma~\ref{Lemm:X=Y}. Then $X_{\gl\mu}(x)=X_{\mu\gl}(-x)$.
\end{corollary}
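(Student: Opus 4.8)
The plan is to deduce this directly from Lemma~\ref{Lemm:X=Y}, which identifies $X_{\gl\mu}(x)$ with the symbolic expression $Y^L_{\gl\mu}(x)$. Since $\max\{\ell(\gl),\ell(\mu)\}$ is symmetric in $\gl$ and $\mu$, for any $L\ge\max\{\ell(\gl),\ell(\mu)\}$ both $X_{\gl\mu}(x)=Y^L_{\gl\mu}(x)$ and $X_{\mu\gl}(-x)=Y^L_{\mu\gl}(-x)$ hold, so it suffices to prove the identity $Y^L_{\gl\mu}(x)=Y^L_{\mu\gl}(-x)$ for one such $L$. Thus the whole statement reduces to a bookkeeping check on the closed form of $Y$.

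Next I would write out $Y^L_{\mu\gl}(-x)$ by interchanging the roles of $\gl$ and $\mu$ and substituting $-x$ for $x$ in the definition, obtaining
$$Y^{L}_{\mu\gl}(-x)=(-1)^{\binom L2}(-x)^L\,\frac{\displaystyle\prod_{a\in B_L^\mu}\prod_{1\leq i\leq a}(i-x)\prod_{b\in B_L^\gl}\prod_{1\le j\le b}(j+x)}{\displaystyle\prod_{(a,b)\in B_L^\mu\times B_L^\gl}(a-b-x)}.$$
The key observations are then: first, the numerator is literally the numerator of $Y^L_{\gl\mu}(x)$ after renaming the dummy product indices (the factor $\prod_{b\in B_L^\gl}\prod_{j\le b}(j+x)$ is $\prod_{a\in B_L^\gl}\prod_{i\le a}(i+x)$, and similarly for the $B_L^\mu$ factor); second, in the denominator, swapping the names of $a$ and $b$ turns $\prod_{(a,b)\in B_L^\mu\times B_L^\gl}(a-b-x)$ into $\prod_{(a,b)\in B_L^\gl\times B_L^\mu}(b-a-x)=(-1)^{|B_L^\gl||B_L^\mu|}\prod_{(a,b)\in B_L^\gl\times B_L^\mu}(a-b+x)$.

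Finally I collect the signs. We have $(-x)^L=(-1)^Lx^L$, and since $|B_L^\gl|=|B_L^\mu|=L$ the denominator contributes a factor $(-1)^{L^2}$; using $L^2\equiv L\pmod 2$ these two sign factors cancel, while $(-1)^{\binom L2}$ is untouched. Hence $Y^L_{\mu\gl}(-x)=(-1)^{\binom L2}x^L\dfrac{\prod_{a\in B_L^\gl}\prod_{1\le i\le a}(i+x)\prod_{b\in B_L^\mu}\prod_{1\le j\le b}(j-x)}{\prod_{(a,b)\in B_L^\gl\times B_L^\mu}(a-b+x)}=Y^L_{\gl\mu}(x)$, and the corollary follows. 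There is no real obstacle here; the only point requiring care is the parity computation $L^2\equiv L\pmod 2$ together with the correct relabeling of the index sets $B_L^\gl\times B_L^\mu$, which is exactly what makes the two spurious sign factors cancel rather than combine.
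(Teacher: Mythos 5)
Your proof is correct and follows exactly the paper's route: reduce via Lemma~\ref{Lemm:X=Y} to the identity $Y^L_{\gl\mu}(x)=Y^L_{\mu\gl}(-x)$, which the paper simply asserts ``follows directly by the definitions.'' Your sign bookkeeping ($(-x)^L$ against the $(-1)^{L^2}$ from reversing the $L^2$ denominator factors, with $L^2\equiv L\pmod 2$) is precisely the verification the paper leaves implicit, and it checks out.
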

\begin{proof}By Lemma~\ref{Lemm:X=Y}, it suffices to show that $Y^L_{\gl\mu}(x)=Y^{L}_{\mu\gl}(-x)$ for any integer
 $L\geq\max\big\{\ell(\gl),\ell(\mu)\big\}$, which follows directly from the definition of $Y^L_{\gl\mu}(x)$
 given in Lemma~\ref{Lemm:X=Y}.\end{proof}

Now we can obtain the $L$-symbolical formula for the Schur elements.
\begin{theorem}
Let $\bgl=(\gl^1;\dots;\gl^m)$ be an $m$-multipartition of $n$ with
$L$-symbol $B_L^{\bgl}=(\beta^{s}_i)_{s,i}$ such that $L\ge\ell(\bgl)$.
Then
\begin{align*}s_{\bgl}(Q)=\frac{(-1)^{\binom m2\binom L2}\displaystyle\prod_{1\le s<t\le m}(q_s-q_t)^L
      \displaystyle\prod_{1\le s,t\le m}\prod_{\alpha_s\in B_L^s}
      \displaystyle\prod_{1\le k\le\alpha_s} (k+q_s-q_t)}
     {\displaystyle\prod_{1\le s< t\le m}
      \displaystyle\prod_{\substack{(\alpha_s,\alpha_t)\in B_L^s\times B_L^t}}
             (\alpha_s+q_s-\alpha_t-q_t)\prod_{1\le s\le m}\prod_{1\le i<j\le L}(\beta_i^s-\beta_j^s)}.\end{align*}
\label{Them:symmetric}
\end{theorem}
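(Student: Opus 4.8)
The plan is to read the $L$-symbolic formula off from the explicit product expression of Theorem~\ref{Them:Schur-elements} by turning each factor $X^{\bgl}_{st}$ into a symbol expression via Lemma~\ref{Lemm:X=Y} and by rewriting the product of hook lengths in terms of the $L$-beta numbers. First I would observe that, under the substitution $x=q_s-q_t$, the factor $X^{\bgl}_{st}$ of Theorem~\ref{Them:Schur-elements} is precisely $X_{\gl^s\gl^t}(q_s-q_t)$ in the notation of Lemma~\ref{Lemm:X=Y}: the three products agree term by term once one writes $j-i-(q_s-q_t)=j-i+q_t-q_s$ and $-k+\bar{\gl}^t_k=\bar{\gl}^t_k-k$. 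Since $L\ge\ell(\bgl)\ge\max\{\ell(\gl^s),\ell(\gl^t)\}$, Lemma~\ref{Lemm:X=Y} gives $X^{\bgl}_{st}=Y^L_{\gl^s\gl^t}(q_s-q_t)$; writing $B_L^s$ for the set of $L$-beta numbers of $\gl^s$ (so that $B_L^s=B_L^{\gl^s}$ in the notation of Lemma~\ref{Lemm:X=Y}), this reads
\begin{align*}
X^{\bgl}_{st}=(-1)^{\binom L2}(q_s-q_t)^L\,
\frac{\displaystyle\prod_{a\in B_L^s}\prod_{1\le i\le a}(i+q_s-q_t)\,\prod_{b\in B_L^t}\prod_{1\le j\le b}(j+q_t-q_s)}
{\displaystyle\prod_{(a,b)\in B_L^s\times B_L^t}(a-b+q_s-q_t)}.
\end{align*}

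Next I would invoke the classical hook-length identity in $\beta$-number form: for a partition $\gl$ with $\ell(\gl)\le L$ and $L$-beta numbers $\beta_1>\dots>\beta_L$ one has
\begin{align*}
\prod_{(i,j)\in[\gl]}h^{\gl}_{ij}=\frac{\prod_{i=1}^{L}\beta_i!}{\prod_{1\le i<j\le L}(\beta_i-\beta_j)},
\end{align*}
which follows at once from the Frame--Robinson--Thrall formula $f^{\gl}=|\gl|!/\prod h^{\gl}_{ij}$ together with the Frobenius expression $f^{\gl}=|\gl|!\prod_{1\le i<j\le L}(\beta_i-\beta_j)/\prod_{i=1}^{L}\beta_i!$, and which is (as it must be) invariant under beta shifts. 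Applying this to each component of $\bgl$ gives
\begin{align*}
\prod_{(i,j,s)\in[\bgl]}h^{\gl^s}_{ij}=\prod_{s=1}^{m}\frac{\prod_{\alpha_s\in B_L^s}\alpha_s!}{\prod_{1\le i<j\le L}(\beta_i^s-\beta_j^s)}.
\end{align*}

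Finally I would substitute these into $s_{\bgl}(Q)=\prod_{(i,j,s)\in[\bgl]}h^{\gl^s}_{ij}\prod_{1\le s<t\le m}X^{\bgl}_{st}$ and collect terms. The sign $(-1)^{\binom L2}$ occurs once for each of the $\binom m2$ pairs $(s,t)$ with $s<t$, producing the prefactor $(-1)^{\binom m2\binom L2}$; the factors $(q_s-q_t)^L$ combine into $\prod_{1\le s<t\le m}(q_s-q_t)^L$; the denominators $\prod_{(a,b)\in B_L^s\times B_L^t}(a-b+q_s-q_t)$ and $\prod_{1\le i<j\le L}(\beta_i^s-\beta_j^s)$ are exactly the denominator in Theorem~\ref{Them:symmetric}. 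For the numerator, split $\prod_{1\le s,t\le m}\prod_{\alpha_s\in B_L^s}\prod_{1\le k\le\alpha_s}(k+q_s-q_t)$ along the diagonal: the $s=t$ terms give $\prod_{s=1}^{m}\prod_{\alpha_s\in B_L^s}\alpha_s!$, which is exactly what the hook-length factors supply, while the $s\ne t$ terms, grouped by the unordered pair $\{s,t\}$, give $\prod_{a\in B_L^s}\prod_{1\le k\le a}(k+q_s-q_t)\prod_{b\in B_L^t}\prod_{1\le k\le b}(k+q_t-q_s)$, which is the numerator of $Y^L_{\gl^s\gl^t}(q_s-q_t)$. Assembling all of this yields the stated formula.

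The argument is essentially bookkeeping once Lemma~\ref{Lemm:X=Y} and the $\beta$-number hook-length formula are available; the one point that requires genuine care is matching the ordered product $\prod_{1\le s,t\le m}$ of the statement against the symmetric contributions of the unordered pairs $\{s,t\}$ together with the diagonal contribution coming from the hook lengths (and similarly checking that the substitution $x=q_s-q_t$ in Lemma~\ref{Lemm:X=Y} reproduces exactly the factors of $X^{\bgl}_{st}$). Should one prefer not to cite the $\beta$-number hook-length formula, it can itself be proved by the same removable-node induction on $|\gl|$ used in the proof of Lemma~\ref{Lemm:X=Y}.
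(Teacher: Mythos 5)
Your proposal is correct and follows essentially the same route as the paper: both arguments combine the $\beta$-number form of the hook-length product with Lemma~\ref{Lemm:X=Y} applied to each $X^{\bgl}_{st}=X_{\gl^s\gl^t}(q_s-q_t)$, and then match the diagonal terms $s=t$ of the ordered product against the factorials $\prod_{\alpha_s\in B_L^s}\alpha_s!$ and the off-diagonal terms against the numerators of $Y^L_{\gl^s\gl^t}(q_s-q_t)$. The only cosmetic difference is direction: the paper starts from the claimed right-hand side and reduces it to $\prod h^{\gl^s}_{ij}\prod_{s<t}Y^L_{\gl^s\gl^t}(q_s-q_t)$, whereas you substitute forward from Theorem~\ref{Them:Schur-elements}; the bookkeeping is identical.
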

\begin{proof}
For an $m$-multipartion $\bgl$ of $n$ with
$L$-symbol $B_L^{\bgl}=(\beta^{s}_i)_{s,i}$ such that $L\ge\ell(\bgl)$, we have the following well-known fact,
see \cite[Examples I.1(4)]{Macdonald},
\begin{align*}\prod_{(i,j,s)\in[\bgl]}h^{\lambda^s}_{ij}
            =\prod_{1\le s\le m}\frac{\displaystyle\prod_{1\le i\le L}\beta_i^s!}
                {\displaystyle\prod_{1\le i<j\le L}(\beta_i^s-\beta_j^s)}.\end{align*}
 Now let \begin{align*}&\nu_\lambda={(-1)^{\binom m2\binom L2}\displaystyle\prod_{1\le s<t\le m}(q_s-q_t)^L
      \displaystyle\prod_{1\le s, t\le m}\prod_{\alpha_s\in B_s}
      \displaystyle\prod_{1\le k\le\alpha_s} (k+q_s-q_t)};\\ &\delta_{\gl}={\displaystyle\prod_{1\le s< t\le m}
      \displaystyle\prod_{\substack{(\alpha_s,\alpha_t)\in B_L^s\times B_L^t}}
             (\alpha_{s}-\ga_{t}+q_s-q_t)\prod_{1\le s\leq m}\prod_{1\le i<j\le L}(\beta_i^s-\beta_j^s)}.\end{align*}
  Then
 \begin{align*}\frac{\nu_\lambda}{\delta_{\gl}}&=\!\!
 \displaystyle\prod_{1\le s\le m}\frac{\displaystyle\prod_{\alpha_s\in B_L^s}\ga_s!}{\displaystyle\prod_{1\leq i<j
 \leq L}(\beta_i^s\!-\!\beta_j^s)}\displaystyle\prod_{1\leq s<t\leq m}(\!-\!1\!)^{\binom
 L2}(\!q_s\!-\!q_t\!)^L\frac{\prod_{1\le s\neq t\le m}\prod_{\ga_s\in B_L^s}\prod_{1\leq k\leq
 \ga_s}(\!k\!+\!q_{s}\!-\!q_{t}\!)}{
      \displaystyle\prod_{(\!\alpha_s,\ga_t\!)\in B_L^s\!\times\!B_L^t}(\!\ga_{s}\!-\!\ga_{t}\!+\!q_{s}\!-\!q_{t}\!)}
\\&=\prod_{(i,j,s)\in[\bgl]}h^{\lambda^s}_{i,j}\prod_{1\leq s<t\leq m} Y_{\gl^s\gl^t}^{L}(q_s-q_t).\end{align*}
 So, by Theorem~\ref{Them:Schur-elements}, the theorem follows immediately by applying
  Lemma~\ref{Lemm:X=Y} for $\gl=\gl^s$, $\mu=\gl^t$ and $x=q_s-q_t$. We have complete the proof.
\end{proof}

\begin{remark} Geck, Iancu
and Malle \cite{GIM} have used a clever specialization argument due to Orellana \cite{O}
to compute the Schur elements for the cyclotomic Hecke algebra using the Markov trace of the Hecke algebras
$H_q(S_n)$, which does not work for dCHA due to that $\tau(1)=0\neq 1$. It would be interesting
 to know whether there is a ``degenerate" version Markov trace for dCHA satisfying the similar properties
as that of Markov trace.\end{remark}
\section{A cancellation-free formula for Schur elements}\label{Sec cancelation}
In this section, we give a cancellation-free formula for the Schur elements.
 Let $\gl$ and $\mu$ be partitions.
If $(i, j)$ is a node of $[\gl]$ and
we define \textit{the generalized hook length} of the node $(i, j)$ with respect to $(\gl, \mu)$ to be the integer
$h_{i,j}^{\gl,\mu}=\gl_i-i+\hat{\mu}_j-j+1$. Observe that if $\gl=\mu$ then $h_{i,j}^{\gl,\mu}=h_{i,j}^{\gl}$.

The following lemma is crucial to the cancellation-free formula for the Schur elements.

\begin{lemma}Assume that $\gl$, $\mu$ are partitions and that $x$ is an
indeterminate. Define
\begin{align*}
Z_{\gl\mu}(x):=\prod_{(i,j) \in [\lambda]}
 (h_{i,j}^{\lambda,\mu}+x)\,
\prod_{(i,j) \in [\mu]}
 (h_{i,j}^{\mu,\lambda}-x).
\end{align*}
Then $X_{\gl\mu}(x)=Z_{\gl\mu}(x)$.
\label{Lemm:X=Z}
\end{lemma}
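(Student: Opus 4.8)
The plan is to prove $X_{\gl\mu}(x) = Z_{\gl\mu}(x)$ by induction on $|\gl| + |\mu|$, mirroring the structure of the proof of Lemma~\ref{Lemm:X=Y}. The base case $\gl = \mu = (0)$ gives $X_{\gl\mu}(x) = Z_{\gl\mu}(x) = 1$ (empty products), which is immediate. For the inductive step I will use the same node-removal strategy: first peel off a removable node from $\gl$ (reducing $|\gl|$), establishing the recursion in the first argument, and then separately peel off a removable node from $\mu$ (reducing $|\mu|$) to handle the second argument; together with the base case this covers all $(\gl,\mu)$.

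The heart of the argument is the node-removal recursion for $Z_{\gl\mu}(x)$. Let $z = (\imath,\jmath)$ be a removable node of $[\gl]$, so $\jmath = \gl_\imath$ and $\imath = \bar{\gl}_\jmath$, and let $\nu$ be the partition with $[\nu] = [\gl]\setminus\{z\}$. I need to compute the ratio $Z_{\gl\mu}(x)/Z_{\nu\mu}(x)$ and check it equals the corresponding ratio $X_{\gl\mu}(x)/X_{\nu\mu}(x)$ already computed inside the proof of Lemma~\ref{Lemm:X=Y}, namely
$$(\jmath+\ell(\mu)-\imath+x)\prod_{1\le i\le\ell(\mu)}\frac{\mu_i-i+1+\imath-\jmath-x}{\mu_i-i+\imath-\jmath-x}.$$
On the $Z$ side, passing from $\gl$ to $\nu$ changes $\gl_\imath$ by $1$, hence shifts $h^{\gl,\mu}_{i,\jmath}$ for $i$ in column $\jmath$ and $h^{\gl,\mu}_{\imath,j}$ for $j$ in row $\imath$; it also changes the roles of $[\gl]$ and $[\mu]$ in the second product via the conjugate $\bar{\gl}$ appearing in $h^{\mu,\gl}_{i,j}$. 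Carefully collecting these, the first product over $[\lambda]$ contributes the single factor $(h^{\gl,\mu}_{\imath,\jmath}+x) = (\gl_\imath - \imath + \bar{\gl}_\jmath - \jmath + 1 + x)$ together with telescoping ratios of hook lengths along row $\imath$ and column $\jmath$, while the second product over $[\mu]$ contributes ratios from the changed column-count $\bar{\gl}_\jmath$. The key combinatorial identity to invoke here is again Lemma~\ref{Lemm:indermine-identity} (with $\ell=1$, $y = \imath - \jmath - x$, and the partition $\mu$), exactly as in Lemma~\ref{Lemm:X=Y}, which converts the telescoped product into the stated closed form $(\jmath+\ell(\mu)-\imath+x)\prod_{i}\frac{\mu_i-i+1+\imath-\jmath-x}{\mu_i-i+\imath-\jmath-x}$. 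Alternatively, and perhaps more cleanly, one can verify $Z_{\gl\mu}(x) = Z_{\mu\gl}(-x)$ directly from the definition (the two products simply swap with $x\mapsto -x$), so that the second half of the induction follows formally from the first half combined with Corollary~\ref{Cor:symmetric}; I would take this route to halve the work.

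I expect the main obstacle to be the bookkeeping in the $Z$-side recursion: identifying precisely which hook lengths change when a corner node is removed and showing the resulting finite product telescopes to the same expression that Lemma~\ref{Lemm:X=Y}'s proof produced for $X_{\gl\mu}(x)/X_{\nu\mu}(x)$. The subtlety is that $h^{\gl,\mu}_{i,j}$ mixes data from both $\gl$ (through $\gl_i$ and $\bar{\gl}_j$) and $\mu$ is irrelevant to it — only $h^{\mu,\gl}_{i,j}$ depends on $\bar{\gl}_j$ — so one must track the single changed column-height $\bar{\gl}_\jmath \mapsto \bar{\gl}_\jmath - 1 = \imath - 1$ carefully through the $[\mu]$-product. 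Once the ratio $Z_{\gl\mu}(x)/Z_{\nu\mu}(x)$ is shown to equal $(\jmath+\ell(\mu)-\imath+x)\prod_{1\le i\le\ell(\mu)}\frac{\mu_i-i+1+\imath-\jmath-x}{\mu_i-i+\imath-\jmath-x}$, which matches the middle line of the displayed computation of $X_{\gl\mu}(x)$ in the proof of Lemma~\ref{Lemm:X=Y}, the inductive step closes and the lemma follows.
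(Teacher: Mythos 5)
Your strategy coincides with the paper's own proof: induct by stripping a removable node $(\imath,\jmath)$ from $\gl$, dispose of the $\mu$-direction via the manifest symmetry $Z_{\gl\mu}(x)=Z_{\mu\gl}(-x)$ combined with Corollary~\ref{Cor:symmetric} (the paper opens its proof with exactly this remark), and match the ratio $Z_{\gl\mu}(x)/Z_{\nu\mu}(x)$ against $X_{\gl\mu}(x)/X_{\nu\mu}(x)$ by means of Lemma~\ref{Lemm:indermine-identity}. Two details of your plan need adjusting, though neither is fatal. First, the bookkeeping is lighter than you describe: the generalized hook length $h^{\gl,\mu}_{i,j}=\gl_i-i+\bar{\mu}_j-j+1$ (the $\bar{\gl}_j$ in the paper's definition is a typo, as the remark ``$h^{\gl,\gl}_{i,j}=h^{\gl}_{i,j}$'' and the base case force) depends on $\gl$ only through $\gl_i$, so removing the corner changes the $[\gl]$-product only along row $\imath$, not along column $\jmath$ as well, and it changes the $[\mu]$-product only along column $\jmath$ (through $\bar{\gl}_{\jmath}\mapsto\imath-1$), which you do identify correctly. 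Second, the instance of Lemma~\ref{Lemm:indermine-identity} that closes the telescoping is \emph{not} the $\ell=1$ case used in the proof of Lemma~\ref{Lemm:X=Y}: the $Z$-side produces the truncated products $\prod_{1\le i\le \bar{\mu}_{\jmath}}$ and $\prod_{1\le j\le \jmath-1}$, and converting these into the full products appearing in $X_{\gl\mu}(x)/X_{\nu\mu}(x)$ requires the lemma with $\ell=\jmath$ and $y=\imath-\jmath-x$ — this is precisely why that lemma is stated for general $\ell$. With these corrections your induction closes exactly as in the paper.
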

\begin{proof}Our proof will proceed by induction on the number of nodes of the partition $\lambda$. We do not need to do
the same for $\mu$ by noting that $Z_{\mu\gl}(x)=Z_{\gl\mu}(-x)$ and Corollary~\ref{Cor:symmetric}.

If $\lambda=(0)$ then
\begin{align*}
X_{\gl\mu}(x)
&=\prod_{(i,j) \in [\mu]}(j-i-x)\\
&=\prod_{1\leq i\leq \ell(\mu)} \prod_{1\leq t \leq \mu_i}(t-i-x)\\
&=\prod_{1\leq i\leq \ell(\mu)} \prod_{1\leq j \leq \mu_i}(\mu_{i}-i-j+1-x)\\
&=\prod_{(i,j) \in [\mu]}(h_{i,j}^{\mu, \gl}-x)\\
&=Z_{\gl\mu}(x),
\end{align*}
 where the third equality follows by setting $t=\mu_i-i-j$.

Now assume that the assertion holds for all partitions $\gl$ with $|\lambda|\leq k-1\geq 0$. We show that it also holds
for partitions $\gl$ with $|\lambda|=k \geq 1$. Note that in this case, there exists $\imath$ such that $(\imath,\jmath)$
is a removable node of $\lambda$, i.e $\jmath=\gl_{\imath}$ and $\imath=\hat{\gl}_{\jmath}$.
Let $\nu$ be the partition obtained from
$\gl$  by removing the removable node $(\imath,\jmath)$.  So $[\lambda]=[\nu] \cup \big\{(\imath,\jmath)\big\}$
 and
 \begin{align*}X_{\lambda\mu}(x)&=X_{\nu\mu}(x)(\jmath-\imath-\mu_1+x)\prod_{1\leq i \leq \mu_1}
\frac{\hat{\mu}_i-i+1+\jmath-\imath+x}{\hat{\mu}_i-i+\jmath-\imath+x}.\end{align*}

On the other hand, we have
\begin{align*}Z_{\gl\mu}(x)&=\!Z_{\nu\mu}(x)\frac{\displaystyle\prod_{(\imath,j)\in[\gl]}(h_{\imath,j}^{\gl,\mu}+x)}
{\displaystyle\prod_{(\imath,j)\in[\nu]}(h_{\imath,j}^{\nu,\mu}+x)}
 \prod_{(i,\jmath)\in[\mu]}\frac{(h_{i,\jmath}^{\mu,\gl}-x)}
{(h_{i,\jmath}^{\mu,\nu}-x)}\\
&=\!Z_{\nu\mu}(x)(\hat{\mu}_{\jmath}-\imath+1+x)\prod_{1\le i\le \jmath-1}\frac{\hat{\mu}_i-i+1+\jmath-\imath+x}
{\hat{\mu}_i-i+\jmath-\imath+x}
 \prod_{1\le i\le \hat{\mu}_{\jmath}}\frac{\mu_i-i+1+\hat{\gl}_{\jmath}-\jmath-x}
{\mu_i-i+\hat{\gl}_{\jmath}-\jmath-x}\\
&=\!Z_{\nu\mu}(x)(\hat{\mu}_{\jmath}-\imath+1+x)\biggl(\prod_{1\le i\le \jmath-1}\frac{\hat{\mu}_i-i+1+\jmath-\imath+x}
{\hat{\mu}_i-i+\jmath-\imath+x}\biggr)\biggl(
 \prod_{1\le i\le \hat{\mu}_{\jmath}}\frac{\mu_i-i+1+\imath-\jmath-x}
{\mu_i-i+\imath-\jmath-x}\biggr)\\
&=\!Z_{\nu\mu}(x)(\hat{\mu}_{\jmath}\!-\!\imath\!+\!1\!+\!x)\biggl(\prod_{1\le i\le \jmath-1}\frac{\hat{\mu}_i\!-\!i\!+\!1\!+\!\jmath\!-\!\imath\!+\!x}
{\hat{\mu}_i\!-\!i\!+\!\jmath\!-\!\imath\!+\!x}\biggr)\biggl(
 \frac{\jmath\!-\!\imath\!-\!\mu_1\!+\!x}{\hat{\mu}_{\jmath}\!-\!\imath\!+\!1\!+\!x}\prod_{\jmath\le i\le \mu_1}\frac{\hat{\mu}_i\!-\!i\!+\!1\!+\!\jmath\!-\!\imath\!+\!x}
{\hat{\mu}_i\!-\!i\!+\!\jmath\!-\!\imath\!+\!x}\biggr)\\
&=\!Z_{\nu\mu}(x)(\jmath\!-\!\imath\!-\mu_1\!+\!x)\prod_{1\le i\le \mu_1}\frac{\!\hat{\mu}_i\!-\!i\!+\!1\!+\!\jmath\!-\!\imath\!+\!x}
{\!\hat{\mu}_i\!-\!i\!+\!\jmath\!-\!\imath\!+\!x},
\end{align*}
where the fourth equality follows by applying
Lemma~\ref{Lemm:indermine-identity} for $\ell=\jmath$ and $y=\imath-\jmath-x$; because that both $X_{\gl\mu}(x)$
 and $Z_{\gl\mu}(x)$ are invariant under beta-shifts, without loss of generality, we may assume that
  $\mu_1$ are large enough. Thus $X_{\gl\mu}(x)=Z_{\gl\mu}(x)$ for partitions $\gl$ and $\mu$ with
  $|\gl|=k$. As a consequence, we have completed the proof.
 \end{proof}

The following is the cancellation-free formula for the Schur elements, which is also symmetric.
\begin{theorem}\label{Thm:cancellation}Let $\bgl=(\gl^1; \dots; \gl^m)$ be an $m$-multipartition of $n$. Then
\begin{align*}s_{\bgl}(Q)=\prod_{1\le s\le m}\prod_{(i,j)\in[\gl^s]}\prod_{1\le t\le m}(h^{\gl^s, \gl^t}_{i,j}+q_s-q_t)=\prod_{1\le s\le m}\prod_{(i,j)\in[\gl^s]}\biggl(h^{\gl^s}_{i,j}\prod_{1\le t\le m\, \&\, t\neq s}(h^{\gl^s, \gl^t}_{i,j}+q_s-q_t)\biggr).\end{align*}
\end{theorem}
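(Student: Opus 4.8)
The plan is to assemble Theorem~\ref{Thm:cancellation} directly from the formula in Theorem~\ref{Them:Schur-elements} by recognizing the two factors there in terms of the quantities already analysed. Writing out Theorem~\ref{Them:Schur-elements}, one has
\[
s_{\bgl}(Q)=\prod_{(i,j,s)\in[\bgl]}h^{\lambda^{s}}_{ij}\prod_{1\le s<t\le m}X^{\bgl}_{st},
\]
and the crucial observation is that $X^{\bgl}_{st}$ is literally $X_{\gl^{s}\gl^{t}}(x)$ with $x=q_{s}-q_{t}$ in the notation of Lemma~\ref{Lemm:X=Y}: indeed the definition of $X_{\gl\mu}(x)$ there has $(j-i-\mu_{1}+x)$ and the factor $\prod_{k}\frac{j-i+\bar\mu_{k}-k+1+x}{j-i+\bar\mu_{k}-k+x}$ over $[\gl]$, together with $\prod_{(i,j)\in[\mu]}(j-i-x)$, which matches $X^{\bgl}_{st}$ verbatim once one puts $\mu=\gl^{t}$, $\lambda=\gl^{s}$, $x=q_{s}-q_{t}$. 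So the first step is just this identification.

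The second step is to apply Lemma~\ref{Lemm:X=Z}, which gives $X_{\gl\mu}(x)=Z_{\gl\mu}(x)=\prod_{(i,j)\in[\lambda]}(h_{i,j}^{\lambda,\mu}+x)\prod_{(i,j)\in[\mu]}(h_{i,j}^{\mu,\lambda}-x)$. Substituting $\lambda=\gl^{s},\mu=\gl^{t},x=q_{s}-q_{t}$ turns $X^{\bgl}_{st}$ into
\[
\prod_{(i,j)\in[\gl^{s}]}(h^{\gl^{s},\gl^{t}}_{i,j}+q_{s}-q_{t})\ \prod_{(i,j)\in[\gl^{t}]}(h^{\gl^{t},\gl^{s}}_{i,j}+q_{t}-q_{s}),
\]
where I have rewritten $-x=q_{t}-q_{s}$ in the second product. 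The key bookkeeping point is that as $(s,t)$ runs over all pairs with $s<t$, the first product above contributes the factor $\prod_{(i,j)\in[\gl^{s}]}(h^{\gl^{s},\gl^{t}}_{i,j}+q_{s}-q_{t})$ attached to the smaller index $s$, while the second product contributes $\prod_{(i,j)\in[\gl^{t}]}(h^{\gl^{t},\gl^{s}}_{i,j}+q_{t}-q_{s})$ attached to the larger index $t$; collecting over all ordered pairs $(s,t)$ with $s\neq t$, these two families together account for exactly all the factors $(h^{\gl^{s},\gl^{t}}_{i,j}+q_{s}-q_{t})$ with $s\neq t$, with no duplication and no omission.

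The third step is to absorb the hook-length prefactor: $\prod_{(i,j,s)\in[\bgl]}h^{\lambda^{s}}_{ij}=\prod_{1\le s\le m}\prod_{(i,j)\in[\gl^{s}]}h^{\gl^{s}}_{i,j}$, and since $h^{\gl^{s}}_{i,j}=h^{\gl^{s},\gl^{s}}_{i,j}=h^{\gl^{s},\gl^{s}}_{i,j}+q_{s}-q_{s}$ this is precisely the missing $t=s$ term. Hence $s_{\bgl}(Q)=\prod_{s}\prod_{(i,j)\in[\gl^{s}]}\prod_{t}(h^{\gl^{s},\gl^{t}}_{i,j}+q_{s}-q_{t})$, which is the first displayed equality; pulling the $t=s$ factor back out of the inner product gives the second equality. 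There is essentially no analytic obstacle here—all the real work was done in Lemma~\ref{Lemm:X=Z}—so the only thing to be careful about is the combinatorial reindexing in the second step, namely checking that the union over $s<t$ of the two one-sided products genuinely reconstitutes the symmetric double product over all $s\neq t$; this is the one place where an off-by-one or a mislabelled index would break the argument, and it is worth spelling out explicitly rather than leaving to the reader.
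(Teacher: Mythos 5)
Your proposal is correct and follows exactly the paper's own route: identify $X^{\bgl}_{st}$ with $X_{\gl^s\gl^t}(q_s-q_t)$, convert it via Lemma~\ref{Lemm:X=Z} into $Z_{\gl^s\gl^t}(q_s-q_t)$, and absorb the hook-length prefactor as the $t=s$ terms. You merely spell out the reindexing over ordered pairs $s\neq t$ more explicitly than the paper does, which is harmless and arguably an improvement.
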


\begin{proof}Applying Lemma~\ref{Lemm:X=Z} for $\gl=\gl^s$, $\mu=\gl^t$ and $x=q_s-q_t$,
$X_{st}^{\lambda}=Z_{\gl^s\gl^t}(q_s-q_t)$  for all $1\leq s<t\leq m$. Thus the theorem follows directly by using Theorem~\ref{Them:Schur-elements}.
\end{proof}

\section{Applications}\label{Sec: Applications}
In this section we give several direct applications of Theorems~\ref{Them:symmetric} and  \ref{Thm:cancellation}
and some remarks.

\begin{point}{}* Let $S_m$ be the symmetric group of order $m$ with simple transpositions
$\sigma_i=(i,i+1)$ for $i=1, \dots, m-1$.
 Note that there is an action of $S_m$ on the set of $m$-multipartitions of $n$
(by permuting components) and also on the rational functions in
$q_1,\dots,q_m$ (by permuting parameters). As a direct application of Theorem~\ref{Them:symmetric},
we obtain the following symmetry formula for the Schur elements, which can also be obtained by observing that
the Specht modules are determined up to isomorphism by the action of Jucys-Murphy elements
$x_1,\dots,x_n$ of $\sh(Q)$ and that the relation $\prod_{i=1}^m(x_1-q_i)=0$ is
invariant under the $S_m$-action.
\end{point}
\begin{corollary}\label{Cor:Schur-sym}
 Assume that $R$ is a field and that $\sh(Q)$ is semi-simple. Then
$s_{\sigma(\bgl)}(Q)=\sigma(s_{\bgl}(Q))$ for all $m$-multipartitions
$\bgl$ of $n$ and all $\sigma\in S_m$.
\end{corollary}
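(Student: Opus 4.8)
The plan is to use the $L$-symbolical formula from Theorem~\ref{Them:symmetric} directly. Fix an $m$-multipartition $\bgl=(\gl^1;\dots;\gl^m)$ of $n$ and choose $L\ge\ell(\bgl)$ large enough to serve simultaneously for $\bgl$ and for $\sigma(\bgl)$ (for instance any $L\ge\ell(\bgl)$ works, since permuting components does not change the maximal length). Let $B_L^{\bgl}=(\beta_i^s)_{s,i}$ be the $L$-symbol of $\bgl$; then the $L$-symbol of $\sigma(\bgl)$ is obtained by permuting the rows of $B_L^{\bgl}$ according to $\sigma$, i.e.\ its $s$-th row of beta numbers is $B_L^{\gl^{\sigma^{-1}(s)}}$. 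The point is that the formula in Theorem~\ref{Them:symmetric} is built entirely out of symmetric ingredients indexed by pairs $(s,t)$.

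First I would examine the denominator factor $\prod_{1\le s\le m}\prod_{1\le i<j\le L}(\beta_i^s-\beta_j^s)$: under $\sigma$ the set of rows is merely permuted, so this product is unchanged (up to the same relabelling), and applying $\sigma$ to the parameters $q_1,\dots,q_m$ does nothing here since no $q$'s occur. Next, the factor $\prod_{1\le s<t\le m}(q_s-q_t)^L$: replacing $\bgl$ by $\sigma(\bgl)$ leaves this untouched, while applying $\sigma$ to the $q$'s changes it only by an overall sign $\mathrm{sgn}(\sigma)^L$; I would track this sign and note it will cancel against analogous sign contributions. The main structural observation is that the ``mixed'' factors $\prod_{1\le s,t\le m}\prod_{\alpha_s\in B_L^s}\prod_{1\le k\le\alpha_s}(k+q_s-q_t)$ and $\prod_{1\le s<t\le m}\prod_{(\alpha_s,\alpha_t)\in B_L^s\times B_L^t}(\alpha_s+q_s-\alpha_t-q_t)$ are indexed over (ordered or unordered) pairs from $\{1,\dots,m\}$, with the $s$-th slot carrying both the row $B_L^s$ and the parameter $q_s$ as a \emph{linked} pair. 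Performing the substitution $\bgl\mapsto\sigma(\bgl)$ simultaneously with $q_i\mapsto q_{\sigma^{-1}(i)}$ therefore just relabels the index of summation/product by $\sigma$, hence leaves each such product invariant; the only possible discrepancies are signs coming from $\binom{m}{2}\binom{L}{2}$, from the $(q_s-q_t)^L$ factor, and from reordering pairs $(s,t)$ with $s<t$ into $(\sigma(s),\sigma(t))$ when $\sigma$ does not preserve the order.

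Concretely, the key step is the bookkeeping of signs. I would write $s_{\sigma(\bgl)}(Q)$ using the formula with $L$-symbol $(\beta_i^{\sigma^{-1}(s)})_{s,i}$, then reindex every product by $s\mapsto\sigma(s)$, $t\mapsto\sigma(t)$; each reindexing of an unordered-pair product $\prod_{s<t}$ over an antisymmetric-looking factor like $(q_s-q_t)^L$ or $(\beta_i^s-\beta_j^s)$ contributes a controlled sign, and I expect these to combine to exactly $\mathrm{sgn}(\sigma)^L$ in a way that matches the sign produced by letting $\sigma$ act on $s_{\bgl}(Q)$ (where $\sigma$ hits the $(q_s-q_t)^L$ factor). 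After cancelling, one gets $s_{\sigma(\bgl)}(Q)=\sigma\bigl(s_{\bgl}(Q)\bigr)$. I anticipate the only real obstacle is this sign accounting — making sure the power of $\mathrm{sgn}(\sigma)$ coming from the combinatorial reindexing of the Vandermonde-type factors and from the $(-1)^{\binom m2\binom L2}$ prefactor exactly matches the power of $\mathrm{sgn}(\sigma)$ produced by the $S_m$-action on the rational function $s_{\bgl}(Q)$. Alternatively, and perhaps more cleanly, I could bypass signs entirely by invoking the cancellation-free formula of Theorem~\ref{Thm:cancellation}: there $s_{\bgl}(Q)=\prod_{s}\prod_{(i,j)\in[\gl^s]}\prod_t (h^{\gl^s,\gl^t}_{i,j}+q_s-q_t)$ is manifestly a product over pairs $(s,t)$ with the data $(\gl^s,q_s)$ linked, so $\bgl\mapsto\sigma(\bgl)$ together with $q_i\mapsto q_{\sigma^{-1}(i)}$ just permutes the factors, giving the result with no signs at all. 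I would present the argument via Theorem~\ref{Thm:cancellation} as the main line and mention Theorem~\ref{Them:symmetric} as the motivating one, since the corollary is stated as an application of the latter; either way the proof is short once the linked-pair structure is made explicit.
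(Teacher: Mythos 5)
Your proposal is correct, but your main line of argument (via the cancellation-free formula of Theorem~\ref{Thm:cancellation}) is a genuinely different route from the paper's. The paper instead works with the factorization $s_{\bgl}(Q)=\prod_{(i,j,s)\in[\bgl]}h^{\gl^s}_{i,j}\prod_{s<t}X_{\gl^s\gl^t}(q_s-q_t)$ and reduces to simple transpositions: the hook-length product is manifestly symmetric, and the swap of two adjacent components is handled by the identity $X_{\gl\mu}(x)=X_{\mu\gl}(-x)$ of Corollary~\ref{Cor:symmetric}, so no sign bookkeeping ever arises. Your cancellation-free route buys even more: since $s_{\bgl}(Q)=\prod_{s}\prod_{(i,j)\in[\gl^s]}\prod_{t}(h^{\gl^s,\gl^t}_{i,j}+q_s-q_t)$ runs over \emph{all} ordered pairs $(s,t)$ with the data $(\gl^s,q_s)$ linked, a simultaneous permutation of components and parameters literally permutes the factors, giving the identity for arbitrary $\sigma$ in one line without passing through generators. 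By contrast, your first sketch --- attacking the $L$-symbolic formula of Theorem~\ref{Them:symmetric} head-on and tracking the powers of $\mathrm{sgn}(\sigma)$ from the Vandermonde-type factors, the $(q_s-q_t)^L$ factor and the prefactor $(-1)^{\binom m2\binom L2}$ --- is left as an expectation rather than a computation, so on its own it would be incomplete; you were right to demote it. (The only loose end in your write-up is the left-versus-right convention for the $S_m$-action, i.e.\ whether $\sigma(\bgl)^s$ is $\gl^{\sigma(s)}$ or $\gl^{\sigma^{-1}(s)}$ and correspondingly for the $q_i$; this must be chosen consistently for the reindexing to close up, but since the claim is for all $\sigma$ it is immaterial.)
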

\begin{proof}For $i=1,\dots, m-1$, by applying Corollary~\ref{Cor:symmetric},
 \begin{align*}X_{\sigma_i(\gl^{i}\gl^{i+1})}(q_i-q_{i+1})=X_{\gl^{i+1}\gl^{i}}(q_{i}-q_{i+1})
 =X_{\gl^{i}\gl^{i+1}}(q_{i+1}-q_{i})=X_{\gl^{i}\gl^{i+1}}(\sigma_i(q_{i}-q_{i+1})).\end{align*}
 By the proof of Theorem~\ref{Them:symmetric}, $s_{\bgl}(Q)=\prod_{(i,j,s)\in[\bgl]}h^{\gl^s}_{i,j}\prod_{1\leq s<t\leq m}X_{s, t}(q_s-q_t)$. So $s_{\sigma_i(\bgl)}(Q)=\sigma_i(s_{\bgl}(Q))$ for all $1 \le i\le m-1$. As a consequence we complete the proof. \end{proof}

As a direct application of the cancellation-free formula,  we obtain the following fact on the Schur elements, which can also be proved by a similar argument to that of \cite[Proposition~7.3.9]{Geck}.

\begin{corollary}Assume that $R$ is a field and that $\sh(Q)$ is semisimple. Then for all $m$-multipartitions $\bgl$
of $n$, $s_{\bgl}(Q)$ is a  polynomial in variables $q_1, \cdots, q_m$ with rational integer coefficients.
\label{Cor:integral}\end{corollary}

\begin{proof}The corollary follows directly by applying Theorem~\ref{Thm:cancellation}. \end{proof}

A second application of the cancellation-free formula is that we can easily recover a
well-known semisimplicity criterion for the degenerate  cyclotomic Hecke algebra due to Ariki, Mathas and Rui
\cite[Theorem 6.11]{AMR}.
To do this, let us assume that $q_1, \dots, q_{m}$ are indeterminates and $R=\mathbb{Q}(q_1, \dots, q_{m})$.
Then the resulting  ``generic'' dCHA $\sh(Q)$ is split semisimple. Now
 assume  that $\theta :  \mathbb{Z}[q_1, \dots, q_{m}] \rightarrow \mathbb{K}$ is a specialization
     and let $\mathbb{K}\sh(Q)$ be the specialized algebra, where $\mathbb{K}$ is any field.
     Note that for all $\bgl\in \mpn$, $s_{\bgl}(Q)\in \mathbb{Z}[q_1, \dots, q_m]$
according to Corollary~\ref{Cor:integral}. Then,  by \cite[Theorem 7.2.6]{Geck},
      $\mathbb{K}\sh(Q)$ is (split) semisimple if and only if, for all $\bgl\in \mpn$,
      $\theta (s_{\bgl} (Q))\neq 0$. From this,   we can deduce the following:

  \begin{theorem}[\cite{AMR}, Theorem~6.11]\label{ssimple}
Assume that $\mathbb{K}$ is a field. The algebra  $\mathbb{K}\sh(Q)$ is (split) semi-simple if and only if
$\theta(P_{\sh}(Q))\neq 0$, where $P_{\sh}(Q)$ is defined in Assumption~\ref{separated}.\end{theorem}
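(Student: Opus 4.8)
The plan is to combine the cancellation--free formula (Theorem~\ref{Thm:cancellation}) with the criterion recalled just before the statement: by \cite[Theorem~7.2.6]{Geck} together with Corollary~\ref{Cor:integral}, the specialized algebra $\mathbb{K}\sh(Q)$ is split semi-simple if and only if $\theta(s_{\bgl}(Q))\neq 0$ for every $\bgl\in\mpn$. Hence it is enough to establish the equivalence
\begin{align*}\theta(s_{\bgl}(Q))\neq 0\text{ for all }\bgl\in\mpn\quad\Longleftrightarrow\quad\theta(P_{\sh}(Q))\neq 0.\end{align*}
Since $\mathbb{K}$ is a field and $P_{\sh}(Q)=n!\prod_{1\le i<j\le m}\prod_{|d|<n}(d+q_i-q_j)$ factors into these pieces, the right--hand side is equivalent to the conjunction of $\theta(k)\neq 0$ for all integers $1\le k\le n$ and $\theta(d+q_i-q_j)\neq 0$ for all $1\le i<j\le m$ and all $|d|<n$.

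For the implication ``$\Leftarrow$'' I would use Theorem~\ref{Thm:cancellation} to write $s_{\bgl}(Q)$ as a product of the linear factors $h^{\gl^s,\gl^t}_{i,j}+q_s-q_t$ over $(i,j)\in[\gl^s]$ and $1\le s,t\le m$, and then estimate each factor. If $t=s$, the factor is the ordinary hook length $h^{\gl^s}_{i,j}$, a positive integer at most $|\gl^s|\le n$ (a hook is a subset of the diagram), so $\theta$ of it is nonzero once $\theta(k)\neq 0$ for $k\le n$. If $t\neq s$, then $|\gl^s|+|\gl^t|\le n$, and using $1\le i\le\ell(\gl^s)$, $1\le j\le\gl^s_i$ and $0\le\bar{\gl}^{t}_{j}\le\ell(\gl^t)$ one checks in a line that $-(n-1)\le h^{\gl^s,\gl^t}_{i,j}=\gl^s_i-i+\bar{\gl}^{t}_{j}-j+1\le n-1$; hence $h^{\gl^s,\gl^t}_{i,j}+q_s-q_t=\pm(d+q_i-q_j)$ for suitable $i<j$ and $|d|<n$, and again $\theta$ of it is nonzero. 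As $\mathbb{K}$ is a field it follows that $\theta(s_{\bgl}(Q))\neq 0$.

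For the implication ``$\Rightarrow$'' I would argue by contraposition: assuming $\theta(P_{\sh}(Q))=0$, I must produce some $\bgl\in\mpn$ with $\theta(s_{\bgl}(Q))=0$. Either $\theta(n!)=0$, or $\theta(d_0+q_{s_0}-q_{t_0})=0$ for some $1\le s_0<t_0\le m$ and some $|d_0|<n$. In the first case take $\bgl$ with $\gl^{1}=(n)$ and all other components empty; by Theorem~\ref{Thm:cancellation} the sub-product over the nodes of $[\gl^{1}]$ with $t=1$ equals $\prod_{j=1}^{n}h^{(n)}_{1,j}=n!$, so $n!$ divides $s_{\bgl}(Q)$ in $\mathbb{Z}[q_1,\dots,q_m]$ and $\theta(s_{\bgl}(Q))=0$. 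In the second case take $\gl^{s_0}$ to be the hook partition $(a,1^{n-a})$, take $\gl^{t_0}=(0)$, and take all remaining components empty, where $1\le a\le n$ is chosen so that $d_0$ lies in the set $\{h^{\gl^{s_0},(0)}_{i,j}\mid(i,j)\in[\gl^{s_0}]\}$; a direct check shows this set is exactly the integer interval $\{-(n-a),\dots,a-1\}$, and an admissible $a$ exists precisely because $|d_0|<n$. Then $|\bgl|=n$, and evaluating Theorem~\ref{Thm:cancellation} at the node of $[\gl^{s_0}]$ with generalized hook length $d_0$ and at $t=t_0$ (using $\gl^{t_0}=(0)$) shows that $d_0+q_{s_0}-q_{t_0}$ occurs among the factors of $s_{\bgl}(Q)$; hence $\theta(s_{\bgl}(Q))=0$.

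Combining the two implications with the recalled criterion gives Theorem~\ref{ssimple}. Once Theorem~\ref{Thm:cancellation} is in hand nothing conceptually deep remains; the part I expect to require the most care in a written-out proof is the ``$\Rightarrow$'' direction, namely checking that hook-shaped components realize \emph{every} value $d_0$ with $|d_0|<n$ as a generalized hook length while keeping the total number of nodes equal to $n$ --- in particular the boundary values $d_0\in\{0,\pm(n-1)\}$ --- together with the bookkeeping of which component of $\bgl$ plays the role of $s$ and which of $t$.
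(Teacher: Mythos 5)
Your proposal is correct and follows essentially the same route as the paper: reduce to the non-vanishing of all $\theta(s_{\bgl}(Q))$ via \cite[Theorem 7.2.6]{Geck} and Corollary~\ref{Cor:integral}, read off the linear factors from Theorem~\ref{Thm:cancellation}, bound the generalized hook lengths by $|h^{\gl^s,\gl^t}_{i,j}|<n$ for one implication, and exhibit explicit witness multipartitions for the other. The only (harmless) deviation is in the witnesses for the second implication: you realize an arbitrary $d_0$ with $|d_0|<n$ as a generalized hook length of a hook partition $(a,1^{n-a})$ placed in component $s_0$ against the empty component $t_0$, whereas the paper splits into the cases $d_0\ge 0$ and $d_0<0$ and uses the row $(n)$ placed in component $s$ or $t$ accordingly.
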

\begin{proof}
Assume first that $\theta (P_{\sh}(Q))=0 $. There are three cases:
\begin{enumerate}
\item If $\theta(n!)=0$, then  $\theta (h^{\eta^1,\eta^1}_{1,n-i+1})=0$ for  $\boldsymbol{\eta}=\big((n);(0);\dots;(0)\big)\in \mpn$. Thus, for this $m$-multipartition, we have $\theta (s_{\bgl}(Q))=0$, which implies that  $\mathbb{K} \sh(Q)$ is not semisimple.
\item If there exist $1\leq s < t \leq m$ and $0 \leq k <n $
  such that $\theta(k+q_s-q_t )=0$, then $\theta ( h_{1,n-k}^{\lambda^s,\lambda^t}+q_s-q_t)=0$
  for  $\bgl\in \mpn$ with $\lambda^s=(n)$, $\lambda^t=(0)$.
  Thus $\theta (s_{\bgl} (Q))=0$ and  $\mathbb{K} \sh(Q)$ is not semisimple.
 \item If there exist $1\leq s < t \leq m$ and $-n<k<0$  such that $\theta (k+q_s-q_t )=0 $,
 then $\theta (h_{1,n+k}^{\lambda^t,\lambda^s}+q_t-q_s)=0$ for  $\bgl\in \mpn$  with $\lambda^s=(0)$, $\lambda^t=(n)$. Again, $\theta (s_{\bgl} (Q))=0$ and  $\mathbb{K} \sh(Q)$ is not semisimple.
 \end{enumerate}
Conversely, if  $\mathbb{K} \sh(Q)$ is not semisimple, then  there exists $\bgl  \in \mpn$ such that
$\theta (s_{\bgl}(Q))= 0$. As for all $1\leq s,t\leq m$ and $(i,j)\in [\lambda^s]$, $-n<h_{i,j}^{\lambda^s,\lambda^t}<n$, we conclude that $\theta ( P_{\sh}(Q))=0$.
\end{proof}

\begin{point}{}* We end this paper by giving some remarks for the study for the dCHA.
 Our motivation is that statements that are regarded as theorems in the setting of the cyclotomic Hecke algebra are
 often adopted as statements in the setting of the dCHA, and  vice versa (see e.g.
 Brundan and  Kleshchev's works \cite{BK-Block, BK-Math.Z.}, Ariki, Mathas and Rui's
 work \cite[\S6]{AMR} and \cite{zhao}).

Recall from \cite[Definition 2.11]{BMM} that the generic degree of $W_{m,n}$ are certain ``spetsial" specializations
of the rational functions $s_{\boldsymbol{\eta}}(Q)/s_{\bgl}(Q)$ where  $s_{\boldsymbol{\eta}}(Q)$ is
the Schur elements associated to the trivial representation of the cyclotomic Hecke algebras,
  which are polynomial with rational coefficients. Moreover, for these specializations $s_{\boldsymbol{\eta}}(Q)$
  is equal to the Poincar\'{e} polynomial of the coinvariant algebra of the reflection representation of $W_{m,n}$.
  It would be interesting to know what are
  the degenerate ``spetsial" specializations for the dCHA algebra and study the properties of
  these degenerate ``spetsial" specializations.

  Note that  Chlouveraki \cite{C} defined the $a$-function and $A$-function attached to very irreducible characters of the cyclotomic Hecke algebra of type $G(m,1,n)$ by involving the Schur elements and showed these functions are constant on the Rouquier blocks of cyclotomic Hecke algebra.  It is very possible that we can do the same issues for  the dCHA. We hope to return to these issues in future work.

 Finally note that Hu and Mathas \cite{Hu-Mathas} have shown that the (degenerate) cyclotomic Hecke algebras are graded symmetric algebras, it may be interesting to determine the explicit formula for the graded Schur elements of the (degenerate) cyclotomic Hecke algebras.
\end{point}


\end{CJK*}
\end{document}